\documentclass[reqno,twoside, 11pt]{amsart}
\usepackage{amsfonts,xcolor,eucal,upgreek}
\usepackage{a4wide,amsmath,amssymb,latexsym,amsthm}
\usepackage[dvips,bottom=1.4in,right=1in,top=1in, left=1in]{geometry}
\usepackage{graphicx,epstopdf,xspace,enumerate,pstricks}
\usepackage{mathrsfs,mathtools,epic,bm,commath,dsfont}
\usepackage[colorlinks=false,urlcolor=blue,
citecolor=red,linkcolor=blue,linktocpage,pdfpagelabels,
pagebackref=true,
bookmarksnumbered,bookmarksopen]{hyperref}
\usepackage[english]{babel}

\setcounter{secnumdepth}{2} \setcounter{section}{0}



\newfont{\script}{eusm10 scaled\magstep1}

\newcommand{\R}{\mathbb{R}}

\newcommand{\dt}{\mathrm{d}t}

\renewcommand{\d}{\mathrm{d}}
\newcommand{\dx}{\mathrm{d}x}
\newcommand{\dy}{\mathrm{d}y}
\renewcommand{\upxi}{\mathcal{E}}
\renewcommand{\star}{*}

\newcommand{\ie}{i.e.,\@\xspace}
\def\Omc{\mathbb{R}^N\setminus\Omega}

\def\RR{{\mathbb{R}}}

\def\Om{\Omega}
\def \dis {\displaystyle}
\usepackage{ifthen}
\usepackage{xargs}
\newcommandx{\yaHelper}[2][1=\empty]{%
\ifthenelse{\equal{#1}{\empty}}%
  { \ensuremath{ \scriptstyle{ #2 } } } 
  { \raisebox{ #1 }[0pt][0pt]{ \ensuremath{ \scriptstyle{ #2 } } } }  
}   
\newcommandx{\yrightarrow}[4][1=\empty, 2=\empty, 4=\empty, usedefault=@]{%
\ifthenelse{\equal{#2}{\empty}}
  { \xrightarrow{ \protect{ \yaHelper[ #4 ]{ #3 } } } } 
  { \xrightarrow[ \protect{ \yaHelper[ #2 ]{ #1 } } ]{ \protect{ \yaHelper[ #4 ]{ #3 } } } } 
}

\numberwithin{equation}{section}


%
%

\usepackage{xcolor}
\usepackage{soul}
\definecolor{jddcol}{rgb}{0,0,0.8}
\definecolor{Warcol}{rgb}{1,0,0}
\definecolor{GMcol}{rgb}{0,1,0.4}

\definecolor{GF}{rgb}{0,0,1}
\definecolor{M}{rgb}{0.1,0.2,0.7}
\definecolor{H}{rgb}{0.7,0.1,0.2}
\definecolor{Y}{rgb}{0.8,0.35,0.1}
\definecolor{MY}{rgb}{0.5,0,0.45}



\newtheorem{theorem}{Theorem}[section]
\newtheorem{proposition}[theorem]{Proposition}
\newtheorem{lemma}[theorem]{Lemma}

\theoremstyle{definition}
\newtheorem{definition}[theorem]{Definition}

\newtheorem{remark}[theorem]{Remark}
\newtheorem{assumption}[theorem]{Assumption}

\DeclareGraphicsRule{.tif}{png}{.png}{`convert #1 `dirname #1`/`basename #1 .tif`.png}

\title[Nonlocal complement value problem]{Nonlocal complement value problem for a global in time parabolic equation}

\author[Djida]{Jean-Daniel Djida}
\author[Foghem]{Guy Fabrice Foghem Gounoue}
\author[Tchaptchie]{Yannick Kouakep Tchaptchie}
\address[Djida]{{\small African Institute for Mathematical Sciences (AIMS), P.O. Box 608, Limbe Crystal Gardens,South West Region, Cameroon. Email: jeandaniel.djida@aims-cameroon.org}}
\address[Foghem]{{\small Fakult\"{a}t f\"{u}r Mathematik Institut f\"{u}r Analysis, TU Dresden Zellescher Weg 23/25, 01217, Dresden, Germany. Email: guy.foghem@tu-dresden.de}}
\address[Tchaptchie]{{\small Department of SFTI-School of Chemical Engineering and Mineral Industries(EGCIM), University of N'Gaound\'{e}r\'{e}, P.O.Box 454 N'Gaound\'{e}r\'{e} (Cameroon). Email: kouakep@aims-senegal.org}}
\thanks{The first author is supported by the Deutscher Akademischer Austausch Dienst/German Academic Exchange Service (DAAD)}
\thanks{The second author is supported by the Deutsche Forschungsgemeinschaft/German Research Foundation (DFG) via the Research Group 3013: “Vector-and Tensor-Valued Surface PDEs”}

\thanks{The third author is supported  by AIMS-Cameroon research center
	via the travel grant research project: Mission de recherche/EGCIM.}


\keywords{Nonlocal operators, 
L\'evy operators,
Parabolic equations: IVP, Weak solutions.}

\subjclass[2010]{
49K20, 
35S15, 
47G20, 
45K05, 
35K90 
}

\begin{document}
\begin{abstract}
The overreaching goal of this paper is to investigate the existence and uniqueness of weak solution of a semilinear parabolic equation with double nonlocality in space and in time variables that naturally arises while modeling a biological nano-sensor in the chaotic dynamics of a polymer chain. In fact, the problem under consideration involves a symmetric integrodifferential operator of L\'{e}vy type and a term called the interaction potential, that depends on the time-integral of the solution over the entire interval of solving the problem. Owing to the Galerkin approximation, the existence and uniqueness of a weak solution of the nonlocal complement value problem is proven for small time under fair conditions on the interaction potential.
\end{abstract}
\maketitle

\section{Introduction}
Let $\Omega$ be an open bounded set of $\mathbb{R}^N(N \geq 1)$. 
For $T>0$, we are interested in studying  the following nonlocal complement value problem
\begin{align}\label{eq:main-equation}
\begin{cases}
\partial_t u +\mathscr{L} u + \displaystyle \varphi \Big(\int_0^T u(\cdot,\tau)\,\mathrm{d}\tau\Big)\, u =0 &\mbox{ in }\; \Omega_T \coloneqq \Omega\times (0,T),\\
u = 0 &\mbox{ in }\; \Sigma \coloneqq (\Omc)\times (0,T), \\
u(\cdot,0) = u_{0} &\mbox{ in }\; \Omega,
\end{cases}
\end{align}
where $u=u(x,t)$ is an unknown scalar function, and $\varphi$ a scalar function that will be specified in the sequel. The initial state $u_0:\Omega\to \mathbb{R}$ is prescribed. Here, we restrict ourselves to a purely integrodifferential operator of L\'{e}vy type $\mathscr{L}$, which is a particular type of nonlocal operators acting on a measurable function $u: \RR^N \to \RR$ as follows
\begin{equation}\label{eq:Levy-operator}
\mathscr{L}u(x) \coloneqq \mathrm{p.v.} \int_{\RR^N}(u(x)-u(y))\nu(x-y)\;\dy, \qquad (x \in \RR^N),
\end{equation}
whenever the right hand side exists and makes sense. Here and henceforward, the function $\nu: \RR^N \setminus \{0\} \to [0,\infty)$ is the density of a symmetric L\'{e}vy measure. In other words, $\nu\geq0$ and measurable such that
\begin{equation*}
\nu(-h) = \nu(h) \quad \mbox{for all} \quad h \in \RR^N \quad \mbox{and} \quad \int_{\RR^N}(1 \wedge |h|^{2})\nu(h)\;\mathrm{d}h <\infty.
\end{equation*}
Notationally, we write $a \wedge b$ to denote $\min(a,b)$ for $a, b \in \RR$. For the sake of simplicity,  we also assume that $\nu$ does not vanish on sets of positive measure. To wit, $\nu$ is fully supported on $\R^N.$ 
A prototypical example of an operator $\mathscr{L}$ is the fractional Laplacian $(-\Delta)^s$, which is obtained by taking $\nu(h) = C_{N,s}|h|^{-N-2s}$ for $h \neq 0$ where $s \in (0,1)$ is fixed and the constant $C_{N,s}$ is given by 
\begin{align*}
C_{N,s} 
=\frac{2^{2s}s\Gamma\big(\frac{N+2s}{2}\big)}{\pi^{N/2}\Gamma(1-s)}.
\end{align*}
The constant $C_{N,s}$ is chosen so that the Fourier relation $\widehat{(-\Delta)^{s}}u(\xi) = |\xi|^{2s}\widehat{u}(\xi)$, $\xi\in \RR^N$, holds for all $u \in C^{\infty}_{c}(\RR^N)$. The fractional Laplacian is one of the most heavily studied integrodifferential operators; see for instance \cite{BV16,CS07,Kw17,Hitchhiker,APM20,Ga19,WYP20} for some basics. Additional results related to the fractional Laplacian can be found, in the references \cite{War,AW20,Caf1,DNA2019}. The operator $\mathscr{L}$ in \eqref{eq:Levy-operator} arises naturally in probability theory as the generator of pure L\'evy stochastic processes with jump interaction measure $\nu(h)\d h$. We refer interested readers to \cite{Sat13,App09,Ber96} for more details on L\'evy processes. Recent studies of Integro-Differential Equations(IDEs) involving nonlocal operators of the form $\mathscr{L}$ in \eqref{eq:Levy-operator} can be found in  \cite{Foghem,FK22}. There exists a substantial amount of literature on nonlocal problems involving L\'evy type operators. For example, see \cite{FKV15,Ru18,FGKV20,FK22} for the study of elliptic problems, see \cite{DyKa20,COS17,Ros15,Ros14} for the regularity of elliptic problems, \cite{FK13,KS13,FR17} for the regularity of parabolic. 

Our main result (see Theorem \ref{thm:main-result}) consists in proving the existence and uniqueness of a weak solution to the problem \eqref{eq:main-equation} for $T$ sufficiently small by imposing some conditions on the initial value $u_0$. It should be noted that we do not require any comparability between the operators $\mathscr{L}$ and $(-\Delta)^s$;  in the sense that the kernels $\nu$ and $|\cdot|^{-N-2s}$ need not be comparable.  Following \cite{St21}, we do this provided that  the potential  $\varphi$ satisfies the following assumption which admits functions $\varphi$ that are not convex and not increasing at $\infty$.
\begin{assumption}\label{eq:assumption-phi}
The potential $\varphi:\mathbb{R}\to [0,\infty)$ is a continuous non-negative function such that $\varphi(0)=0$ and $\tau\mapsto \varphi(\tau)\tau$ is a non-decreasing differentiable function whose derivative is bounded on every compact subset of $\mathbb{R}$.
\end{assumption}

An interesting feature of the problem under consideration is that main equation in \eqref{eq:main-equation} contains a nonlocal operator of L\'{e}vy type in space and a nonlocal in time term that depends on the integral over the whole interval $(0,T)$ on which the problem is being solved, viz., problem \eqref{eq:main-equation} has a global memory, i.e., its depends upon the memory and the future.
Note in passing that if $t\in (0, T)$ is the current time then the memory is recorded in $(0,t)$, while the future is recorded in $(t,T)$. The mixture of nonlocal terms (spatial and time variables) appearing in \eqref{eq:main-equation} renders the problem somehow fully nonlocal with global memory. For this reason, the problem \eqref{eq:main-equation} is termed nonlocal and global in time. It is noteworthy emphasizing that, similar analysis has been carried out in \cite{St21} where the Laplace operator $-\Delta$ is used in place of the nonlocal operator $\mathscr{L}$. There are several works in the literature that study parabolic problems with memory which include the integral of the solution from the initial to the current time, e.g., see \cite{CS11,VDS15,JL08,ZH20}. To the best of our knowledge, the existing problems with memory in the literature differ from ours. Indeed, on the one side, in our problem, we have to deal with the nonlocality in time variable occurred by the semilinear factor depending on the integral over the whole interval $(0,T)$ appearing in equation \eqref{eq:main-equation} whose knowledge  demands to know the so called``future'' which is $T$. Moreover, the nonlocality in time in the system  \eqref{eq:main-equation} is governed by a semilinearity due to the potential $\varphi$. On the other side, we have to deal with the nonlocality in spatial variables, due to the nonlocal L\'evy operator $\mathscr{L}$. The main novelty of the problem \eqref{eq:main-equation} is marked by this double nonlocality in spatial and in time variables rendering the latter the problem  somewhat challenging and of particular interest in its own.
 It is worth emphasizing that the nonlocal problem in \eqref{eq:main-equation}  as well as its local analogue in space cannot be reduced to known ones by any transformation. We refer the interested reader to \cite{Pa95,Sh91}) for the studies of some local problems with initial boundary values  with memory where the ``future'' appears in the data. On the other hand, we point out that our problem arises in the local setting while modeling a biological nanosensor in the chaotic dynamics of a polymer chain s also called, a polymer chain in an aqueous solution. In this model, the density of the probability that a chain occupies certain region of the underlying space allows to describe the position of the chain segment. According to \cite{SS17}, the probability density satisfies with a high accuracy, a certain parabolic equation of the form \eqref{eq:main-equation} in which there is a term responsible for the interaction of the chain of polymer segments.  For more on the application of the problem \eqref{eq:main-equation}, we refer  interested reader for instance to  \cite{St21,St18,SS17} and references therein. 
 Theoretical motivations of studying the nonlocal model \eqref{eq:main-equation} are at least twofold. Firstly, in contrast to the local model, the advantage of the nonlocal model \eqref{eq:main-equation} is that, it allows both smooth and non-smooth solutions $u$ in space variables. Secondly, the nonlocal model can be viewed as an approximation of local model. For example, given $u\in C^2(\R^N)\cap L^\infty(\R^N)$ one can show that $(-\Delta)^s u(x)\to -\Delta u(x)$ as $s\to1^-$. More generally, we have $\mathscr{L}_\varepsilon u(x)\to -\Delta u(x)$ as $\varepsilon\to0$ (see  \cite{Fog21r, FK22}), where $\mathscr{L}_\varepsilon u$ is defined as in \eqref{eq:Levy-operator} with $\nu$ replaced by
 	$\nu_\varepsilon$ satisfying 
 	\begin{align}\label{eq:assumption-nu-alpha}
 	\begin{split}
 	\nu_\varepsilon\geq 0\,\,\text{ is radial}, \quad \int_{\R^N}	(1\land |h|^2)\nu_\varepsilon (h)\d h=\tfrac 1N, \quad \text{and $\forall\,\, \delta>0$\,\,\,} \lim_{\varepsilon\to 0}\int_{|h|>\delta}	\nu_\varepsilon(h)\d h=0\,.
 	\end{split}
 	\end{align} 
 	\noindent If we assume that $\nu\geq 0$ is radial and satisfies $\int_{\R^N}(1\land |h|^2)\nu(h)\d h=\frac1N$ then a remarkable example of family $(\nu_\varepsilon )_\varepsilon$ satisfying \eqref{eq:assumption-nu-alpha}, is obtained from the rescaled version $\nu$ as follows
 	\begin{align*}
 	\begin{split}
 	\nu_\varepsilon(h) = 
 	\begin{cases}
 	\varepsilon^{-N-2}\nu\big(h/\varepsilon\big)& \text{if}~~|h|\leq \varepsilon\\
 	\varepsilon^{-N}|h|^{-2}\nu\big(h/\varepsilon\big)& \text{if}~~\varepsilon<|h|\leq 1\\
 	\varepsilon^{-N}\nu\big(h/\varepsilon\big)& \text{if}~~|h|>1.
 	\end{cases}
 	\end{split}
 	\end{align*}  

 In \cite{St18}, the weak solvability of the problem is proven for the case where $u$ is a positive bounded function and $\varphi$ is the so called Flory-Huggins potential, i.e., is a convex increasing function that tends to infinity as its argument approaches a certain positive value.  The positiveness is a natural requirement since $u$ is a density probability. The landmark works in \cite{St21,St18} demonstrate that, in the case where only the Laplace operator is involved, the problem \eqref{eq:main-equation} is well-posed for sufficiently small time $T$. As a matter of interest the present work takes the result in \cite{St21} to the next stage by using the generator of a pure jump stochastic process of L\'evy type, which is a symmetric nonlocal operator of the form $\mathscr{L}$, to prove further results on weak solvability for this type of problem. The rest of the paper is structured as follows. In Section \ref{Sec:preli}, we provide some well-known results  and functions spaces which are  useful in this paper.  In Section \ref{Sec:auxiliaries}, we prove auxiliary results which are the milestones to prove our core result. Finally, Section~\ref{Sec:WSU} is devoted to the proof of the existence and uniqueness of a weak solution to the problem \eqref{eq:main-equation} thereby constituting the main goal of this article. We prove the existence with the aid of the Tychonoff fixed-point theorem and prove the uniqueness for sufficiently small $T$.

\vspace{2mm}
\emph{Acknowledgment:} The authors thank Victor Starovoitov for helpful and productive   discussions on the proof  Theorem \ref{thm:existence-elliptic} and  Lemma \ref{lem:max-principle}. 

\section{Notations and Preliminaries}\label{Sec:preli}
The purpose of this section is to introduce notations and some preliminary results. Let us collect some basics on nonlocal Sobolev-like spaces in the $L^2$ setting that are generalizations of Sobolev--Slobodeckij spaces and which will be very helpful in the sequel. Let us emphasize that, these function spaces are tailor made for the study of complement value problems involving  symmetric L\'{e}vy operators of type $\mathscr{L}$. We refer the reader to \cite{Foghem} more extensive discussions on this topic.

From now on, unless otherwise stated, $\Omega \subset \RR^N $ is an open bounded set. We also assume that $\nu: \RR^N\setminus\{0\} \to [0,\infty]$ has full support, satisfies the L\'{e}vy integrability condition, \ie $\nu \in L^{1}(\RR^N,(1 \wedge |h|^{2})\mathrm{d}h)$ and is symmetric, \ie $\nu(h) = \nu(-h)$ for all $h \in \RR^N$. We define the space
\begin{equation}\label{NLLSS}
V_{\nu}(\Omega\vert \RR^N) \coloneqq \left\lbrace  u: \RR^N \to \RR \quad \mbox{meas}: \upxi(u,u) < \infty \right\rbrace,
\end{equation}
where $\upxi(\cdot,\cdot)$ is the bilinear form defined by
\begin{equation*}
\upxi(u,v) \coloneqq  \frac{1}{2}\iint\limits_{\mathcal{Q}(\Omega)}(u(x) - u(y))(v(x) - v(y))\nu(x-y)\;\mathrm{d}y\; \d x\,, 
\end{equation*}
where $\mathcal{Q}(\Omega)$ is the cross-shaped set on $\Omega$ given by
\[
\mathcal{Q}(\Omega) \coloneqq (\Omega \times \Omega) \cup (\Omega \times (\RR^N \setminus \Omega)) \cup ((\RR^N \setminus \Omega) \times \Omega).
\] 

\noindent We endow the space $V_{\nu}(\Omega\vert \RR^N)$ with the norm 
\[
\|u\|_{V_{\nu}(\Omega\vert \RR^N)} \coloneqq \Big( \int_{\Omega}|u(x)|^{2}\; \dx +\upxi(u,u) \Big)^{\frac{1}{2}}.
\]

\noindent In order to study the Dirichlet problem \eqref{eq:main-equation} we also need to define the subspace of functions in $V_{\nu}(\Omega\vert \RR^N)$ that vanishes on the complement of $\Omega$, \ie
\[
\mathbb{X}_{\nu}(\Omega \vert \RR^N):= \left\lbrace u \in V_{\nu}(\Omega\vert \RR^N): u = 0 \quad \mbox{a.e.} \quad \mbox{on}\quad \RR^N \setminus \Omega \right\rbrace,
\]
where $V_{\nu}(\Omega\vert \RR^N)$ is defined as in \eqref{NLLSS}. The space $\mathbb{X}_{\nu}(\Omega \vert \RR^N)$ is clearly a closed subspace of $V_{\nu}(\Omega \vert \RR^N)$. Furthermore, if $\partial \Omega$ is continuous then \cite{Foghem} smooth functions of compact support $C_c^\infty(\Omega)$ are dense in $\mathbb{X}_{\nu}(\Omega \vert \RR^N)$. In addition, we have that
\[
\|u\|_{\mathbb{X}_{\nu}(\Omega \vert \RR^N)} = \Big( \iint\limits_{\RR^N\RR^N}(u(x) - u(y))^{2}\nu(x-y)\;\mathrm{d}y\; \mathrm{d}x \Big)^{\frac{1}{2}}
\]
defines an equivalent norm on $\mathbb{X}_{\nu}(\Omega \vert \RR^N)$. Indeed, in virtue of the Poincar\'{e}-Friedrichs inequality on $\mathbb{X}_{\nu}(\Omega \vert \RR^N)$,  there exists a constant $C = C(N,\Omega, \nu)>0 $ depending only on $N,\Omega$ and $\nu$ such that
\begin{equation} \label{eq:poincareX}
\|u\|_{L^2(\Omega)}^{2} \leq C \|u\|^{2}_{\mathbb{X}_{\nu}(\Omega \vert \RR^N)} \quad
\mbox{for every} \quad u\in \mathbb{X}_{\nu}(\Omega \vert \RR^N).
\end{equation}
	
\noindent This can be  verified by observing that $\mathbb{R}^N\setminus B_R(x)\subset \Omc$ for all $x\in \Omega$, where  $R>0$ is the diameter of $\Omega$.   For $u\in \mathbb{X}_{\nu}(\Omega \vert \RR^N)$, we recall that $u=0~a.e$ on $\Omc$. Hence we have  
\begin{align*}
 \|u\|^{2}_{\mathbb{X}_{\nu}(\Omega \vert \RR^N)} &\geq 2\int_{\Omega}|u(x)|^2\d x\int_{\Omc}\hspace{-2ex} \nu(x-y)\d y\\
 &\geq 2\int_{\Omega}|u(x)|^2\d x\int_{\RR^N\setminus B_R(x)} \hspace{-4ex}\nu(x-y)\d y= 2\|\nu_R\|_{L^1(\mathbb{R}^N)}\|u\|^2_{L^2(\Omega)}.
\end{align*}
It suffices to take C= $(2\|\nu_R\|_{L^1(\RR^N)})^{-1}$ with $\nu_R=\nu \mathds{1}_{\mathbb{R}^N\setminus B_R(0)}$. According to \cite{JW19},  the Poincar\'{e}-Friedrichs inequality \eqref{eq:poincareX} remains true  if  $\Omega$ is only bounded in one direction.\medskip

\begin{remark}\label{rmk:equivalence}
	One can observe that the notation $V_{\nu}(\Omega\vert \RR^N)$ is to emphasize that the integral of the measurable map $(x,y) \mapsto (u(x)-u(y))^{2}\nu(x-y)$ performed over $\Omega \times \RR^N$ is finite. Moreover, the latter is equivalent to that performed over $\mathcal{Q}(\Omega), $ that is, $\mathcal{E}(u,u)\asymp\iint_{\mathcal{Q}(\Omega)} (u(x)-u(y))^{2}\nu(x-y)\dy\dx.$ From the local scenario point of view, it is fair to see the space $V_{\nu}(\Omega\vert \RR^N)$ as the nonlocal replacement of the classical Sobolev space $H^1(\Omega)$, whereas $\mathbb{X}_{\nu}(\Omega\vert \RR^N)$ can be viewed as the nonlocal replacement of the classical Sobolev space $H^1_0(\Omega)$.
\end{remark}
The aforementioned spaces are Hilbert spaces. Additional, recent finds about these function spaces and their relations with classical Sobolev spaces can be found in \cite{Foghem, Fog21s,Fog21r}. Let $(V_{\nu}(\Omega \vert \RR^N))^{\star}$ and $(\mathbb{X}_{\nu}(\Omega \vert \RR^N))^{\star}$ be the dual spaces of $V_{\nu}(\Omega \vert \RR^N)$ and $\mathbb{X}_{\nu}(\Omega \vert \RR^N)$ respectively. We have the following continuous Gelfand triple embeddings
\begin{equation*}
\mathbb{X}_{\nu}(\Omega \vert \RR^N) \hookrightarrow L^2(\Omega)\hookrightarrow (\mathbb{X}_{\nu}(\Omega \vert \RR^N))^{\star} \quad \mbox{and}\quad V_{\nu}(\Omega \vert \RR^N) \hookrightarrow L^2(\Omega)\hookrightarrow (V_{\nu}(\Omega \vert \RR^N))^{\star}  .
\end{equation*}
The next result borrowed from \cite{Foghem,JW19,DMT18}, provides  sufficient conditions under which the spaces $\mathbb{X}_{\nu}(\Omega \vert \RR^N)$ and $V_{\nu}(\Omega \vert \RR^N)$ are compactly embedded in $L^2(\Omega)$. 
\begin{theorem}\label{thm:compactness}
	Assume that $\nu \in L^1(\R^N, 1\land |h|^2)$ and  $\nu \not\in L^1(\R^N)$. If $\Omega\subset \R^N$ is open and bounded then the embedding $\mathbb{X}_{\nu}(\Omega \vert \RR^N) \hookrightarrow L^2(\Omega)$ is compact. Furthermore, the embedding $V_{\nu}(\Omega \vert \RR^N) \hookrightarrow L^2(\Omega)$ is also compact provided that in addition, $\Omega$ has a Lipschitz boundary and $\nu$ satisfies
	\begin{align}\label{eq:asymp}
	\lim_{\delta\to 0}\frac{1}{\delta^2} \int_{B_\delta(0)} |h|^2\nu(h)\d h=\infty.
	\end{align} 
\end{theorem}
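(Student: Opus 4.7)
My plan is to apply the Fr\'echet--Kolmogorov--Riesz compactness theorem: a bounded family $\mathcal F\subset L^2(\Omega)$ is precompact iff (a) for every $\varepsilon>0$ there exists $\omega\subset\subset\Omega$ with $\sup_{u\in\mathcal F}\|u\|_{L^2(\Omega\setminus\omega)}<\varepsilon$ (tightness) and (b) $\sup_{u\in\mathcal F}\|\tau_h u-u\|_{L^2(\omega)}\to 0$ as $|h|\to 0$ (translation equicontinuity), where $\tau_h u:=u(\cdot+h)$. Under the hypothesis of the first claim, functions in $\mathbb X_\nu(\Omega\vert\R^N)$ extended by zero live in $L^2(\R^N)$ with support in $\bar\Omega$, so (a) is automatic and only (b) must be checked.

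For the first claim I would pass to the Fourier side. Plancherel gives
\[
\mathcal E(u,u)=\int_{\R^N}\psi(\xi)\,|\widehat u(\xi)|^2\,d\xi,\qquad \psi(\xi)\coloneqq 2\int_{\R^N}\bigl(1-\cos(h\cdot\xi)\bigr)\nu(h)\,dh,
\]
and the key analytic fact is that $\nu\notin L^1(\R^N)$ combined with $\nu\in L^1(1\wedge|h|^2)$ forces the L\'evy symbol $\psi(\xi)\to\infty$ as $|\xi|\to\infty$. Indeed, for any $M>0$, pick $\delta>0$ small enough that $\int_{\delta\le|h|\le 1}\nu(h)\,dh>M+1$ (possible because the singularity of $\nu$ at $0$ is non-integrable); on this annulus $\nu$ is finite, so Riemann--Lebesgue gives $\bigl|\int_{\delta\le|h|\le 1}\cos(h\cdot\xi)\nu(h)\,dh\bigr|<1$ for $|\xi|$ large, whence $\psi(\xi)\ge 2M$. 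Given a bounded sequence $\{u_n\}\subset\mathbb X_\nu$, Poincar\'e--Friedrichs~\eqref{eq:poincareX} bounds it in $L^2(\R^N)$; compact support in $\bar\Omega$ makes $\widehat{u_n}$ a uniformly equi-Lipschitz family of entire functions (since $|\widehat{u_n}(\xi)-\widehat{u_n}(\xi')|\le(\operatorname{diam}\Omega)\|u_n\|_{L^1}|\xi-\xi'|$), so Arzel\`a--Ascoli extracts a locally uniformly convergent subsequence, while the tail estimate $\int_{|\xi|\ge R}|\widehat{u_n}|^2\,d\xi\le\bigl(\inf_{|\xi|\ge R}\psi\bigr)^{-1}\mathcal E(u_n,u_n)$ makes the Fourier tails uniformly small. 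Plancherel then delivers $L^2$-convergence.

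For the second claim both conditions must be verified on $\Omega$. For (b) on $\omega\subset\subset\Omega$ with $\operatorname{dist}(\omega,\partial\Omega)>2\delta_0$ and $|h|\le\delta\le\delta_0$, I would use the weighted averaging identity
\[
u(x+h)-u(x)=\int\rho_\delta(y)\bigl[u(x+h)-u(x+y)\bigr]\,dy+\int\rho_\delta(y)\bigl[u(x+y)-u(x)\bigr]\,dy
\]
with probability density $\rho_\delta(y):=|y|^2\nu(y)\mathds 1_{B_\delta}(y)/W(\delta)$ and $W(\delta):=\int_{B_\delta}|y|^2\nu(y)\,dy$ (finite by the L\'evy condition). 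Jensen's inequality, the bound $|y|^2\le\delta^2$ on $B_\delta$, Fubini, and a symmetrising change of variables $y\mapsto y-h$ in the first term (whose cost is a harmless constant once $|h|\le\delta$, because the pairs involved stay in $\Omega\times\Omega\subset\mathcal Q(\Omega)$ thanks to $\omega+B_{2\delta}\subset\Omega$) yield
\[
\|\tau_h u-u\|_{L^2(\omega)}^2\le C\,\frac{\delta^2}{W(\delta)}\,\mathcal E(u,u),
\]
and assumption~\eqref{eq:asymp} is precisely $\delta^2/W(\delta)\to 0$. For (a), the Lipschitz regularity of $\partial\Omega$ permits a partition of unity and local bi-Lipschitz flattening that reduces the boundary-collar estimate to an interface version of the same averaging inequality, this time handling pairs $(x,y)\in\Omega\times(\R^N\setminus\Omega)\subset\mathcal Q(\Omega)$.

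The main obstacle is the Fourier-analytic divergence $\psi\to\infty$ in the first claim, which pinpoints where $\nu\notin L^1$ is used (without it $\psi$ stays bounded and the embedding fails to be compact), together with the symmetrisation in the averaging identity of the second claim, where the weight $\nu(y)$ appears at the ``wrong distance'' $|y|$ rather than at $|y-h|$; this mismatch has to be absorbed via the change of variable $y\mapsto y-h$ and a careful comparison of integration domains.
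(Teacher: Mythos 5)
A preliminary remark: the paper does not prove Theorem \ref{thm:compactness} at all --- it is imported from \cite{Foghem,JW19,DMT18} --- so there is no in-paper argument to measure you against. Judged on its own terms, your proof of the first embedding is correct and is essentially the standard one from those references: the symbol identity $\|u\|^2_{\mathbb{X}_{\nu}(\Omega\vert\R^N)}=\int_{\R^N}\psi(\xi)|\widehat{u}(\xi)|^2\,\d\xi$ for zero-extended $u$, the deduction of $\psi(\xi)\to\infty$ from $\nu\notin L^1(\R^N)$ via Riemann--Lebesgue on an annulus $\{\delta\le|h|\le1\}$ carrying arbitrarily large mass, and Arzel\`a--Ascoli applied to the equibounded, equi-Lipschitz family $\{\widehat{u_n}\}$ combined with the uniform high-frequency tail bound. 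No objection there.

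The second embedding is where the proposal has a genuine gap, and it sits exactly where you flag the ``main obstacle''. The second term of your averaging identity is fine and produces the factor $\delta^2/W(\delta)$, which is indeed how \eqref{eq:asymp} enters. But in the first term, after Jensen, Fubini and the substitution $y\mapsto y-h$, the surviving weight is $|z+h|^2\nu(z+h)\mathds{1}_{B_\delta}(z+h)$ while the increment of $u$ is taken over the displacement $z$; to dominate the resulting integral by $\mathcal{E}(u,u)$ you need a pointwise bound $\nu(z+h)\le C\,\nu(z)$ for $|z|,|h|\lesssim\delta$. This translation-comparability is not implied by the standing hypotheses (symmetry, full support, L\'evy integrability, \eqref{eq:asymp}): a symmetric L\'evy density may oscillate by unbounded factors between nearby points, so the ``harmless constant'' is not harmless --- and whichever way you set up the intermediate-point average, one of the two differences is always weighted by $\nu$ evaluated at the wrong displacement, so the defect is structural rather than a matter of bookkeeping. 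The same objection applies, with more force, to your boundary step: a bi-Lipschitz flattening $\Phi$ turns $\nu(x-y)$ into $\nu(\Phi^{-1}(x')-\Phi^{-1}(y'))$, which is comparable to $\nu(x'-y')$ only when $\nu$ is comparable to a radial monotone profile. As written, your scheme proves the second embedding only under an additional regularity assumption on the kernel (e.g.\ $\nu$ radial and almost decreasing near the origin). For general $\nu$ one must avoid pointwise kernel comparisons altogether, e.g.\ by running the Fourier argument on $\chi u$ for cutoffs $\chi\in C_c^\infty(\Omega)$ --- the commutator term $\iint u^2(y)(\chi(x)-\chi(y))^2\nu(x-y)\,\dx\,\dy$ is finite by the L\'evy integrability condition alone --- which yields compactness in $L^2_{\mathrm{loc}}(\Omega)$, and then controlling the boundary collar through an extension or reflection argument on the Lipschitz domain; this is the route taken in the cited references.
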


\noindent It is worthwhile noticing that we have the natural continuous and dense embeddings
\begin{align*}
L^2(0,T;\mathbb{X}_{\nu}(\Omega \vert \RR^N))\hookrightarrow L^2(0,T;L^2(\Omega))\hookrightarrow L^2(0,T;(\mathbb{X}_{\nu}(\Omega \vert \RR^N))^{\star}).
\end{align*}
Next we introduce the Sobolev type space $H_\nu(0,T):=H^1(0,T; \mathbb{X}_{\nu}(\Omega \vert \RR^N)$ by 
\begin{equation*}
H_\nu(0,T)= \Big\{\psi \in L^2(0,T;\mathbb{X}_{\nu}(\Omega \vert \RR^N)):\quad \partial_t\psi \in L^2(0,T;((\mathbb{X}_{\nu}(\Omega \vert \RR^N))^{\star})\Big\}.
\end{equation*}
The space $H_\nu(0,T)$ (see \cite{Tommaso}) is a Hilbert space endowed with the  norm given by
\begin{equation}\label{normW0T}
\|\psi\|^2_{H_\nu(0,T)}=\|\psi\|^2_{L^2(0,T;\mathbb{X}_{\nu}(\Omega \vert \RR^N))}+\|\partial_t\psi_{L^2(0,T;(\mathbb{X}_{\nu}(\Omega \vert \RR^N))^{\star})}.
\end{equation}

\noindent  As a consequence of Theorem \ref{thm:compactness}, we get the  compact embedding \eqref{Lions-compactness} below.

\begin{proposition} \label{prop:emb} Assume that $\nu \not\in L^1(\R^N)$ and  $\Omega\subset \R^N$ is open bounded. The following are true. 
 \begin{enumerate}[$(i)$]
 \item Lions-Magenes Lemma \cite[Theorem II.5.12]{BF13}$:$ the following  embedding  is continuous 
 \begin{equation}\label{contWTA}
 H_\nu(0,T)\hookrightarrow C([0,T];L^2(\Omega)).
 \end{equation}
 
 \item  Lions-Aubin Lemma \cite[Theorem II.5.16]{BF13}$:$ 
 the following embedding  is compact 
  \begin{equation}\label{Lions-compactness}
  H_\nu(0,T) \hookrightarrow L^2(0,T; L^2(\Omega)).
  	\end{equation}
 \end{enumerate} 
\end{proposition}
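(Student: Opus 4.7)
The plan is to obtain both embeddings by directly invoking the abstract Lions--Magenes and Aubin--Lions lemmas cited from \cite{BF13}, after checking that the ambient Gelfand-triple setup required by those abstract results is precisely the one we have already assembled for $\mathbb{X}_{\nu}(\Omega\vert \RR^N)$, $L^2(\Omega)$, and $(\mathbb{X}_{\nu}(\Omega\vert \RR^N))^\star$. In both cases almost all of the real work has been done upstream: the Hilbert structure of $\mathbb{X}_\nu$ is in hand, the continuous and dense Gelfand triple
\[
\mathbb{X}_{\nu}(\Omega\vert \RR^N) \hookrightarrow L^2(\Omega) \hookrightarrow (\mathbb{X}_{\nu}(\Omega\vert \RR^N))^\star
\]
is already recorded in the preliminaries, and Theorem \ref{thm:compactness} supplies the key compactness input.

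For part $(i)$, I would set $V \coloneqq \mathbb{X}_{\nu}(\Omega\vert \RR^N)$ and $H \coloneqq L^2(\Omega)$, noting that $V$ is a separable Hilbert space densely and continuously embedded into $H$ (density follows from the density of $C_c^\infty(\Omega)$ in $\mathbb{X}_{\nu}(\Omega\vert \RR^N)$ stated in the preliminaries, at least when $\partial\Omega$ is continuous, and otherwise can be replaced by a direct density argument within $\mathbb{X}_\nu$). The abstract Lions--Magenes lemma from \cite[Theorem II.5.12]{BF13} then asserts that every element of
\[
W(0,T) \coloneqq \bigl\{ \psi \in L^2(0,T;V)\,:\, \partial_t \psi \in L^2(0,T;V^\star) \bigr\}
\]
admits a representative in $C([0,T];H)$, with continuous inclusion when $W(0,T)$ carries the graph norm \eqref{normW0T}. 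Since $H_\nu(0,T)$ is defined to be exactly this $W(0,T)$, the embedding \eqref{contWTA} is immediate.

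For part $(ii)$, I would apply the Aubin--Lions compactness lemma as stated in \cite[Theorem II.5.16]{BF13} with the triple
\[
V_0 \coloneqq \mathbb{X}_{\nu}(\Omega\vert \RR^N), \qquad V \coloneqq L^2(\Omega), \qquad V_1 \coloneqq (\mathbb{X}_{\nu}(\Omega\vert \RR^N))^\star,
\]
and exponents $p = q = 2$. The required hypotheses are a compact embedding $V_0 \hookrightarrow\hookrightarrow V$ and a continuous embedding $V \hookrightarrow V_1$. The latter is part of the Gelfand triple, while the former is precisely Theorem \ref{thm:compactness}, whose standing hypothesis $\nu \notin L^1(\R^N)$ is also the assumption made in the statement of the proposition. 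The abstract lemma then yields the compact inclusion \eqref{Lions-compactness}.

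The principal thing to verify, and hence the main (mild) obstacle, is that the abstract framework of \cite{BF13} applies verbatim: specifically, that $\mathbb{X}_{\nu}(\Omega\vert \RR^N)$ is a separable Hilbert space densely included in $L^2(\Omega)$ with the dual pairing extending the $L^2$ inner product. These points are routine given the discussion in the preliminaries, so no substantial new calculation is needed beyond matching notations with the abstract statements in \cite{BF13}.
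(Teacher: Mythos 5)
Your proposal is correct and follows essentially the same route as the paper, which likewise offers no independent argument beyond setting up the Gelfand triple, citing \cite[Theorems II.5.12 and II.5.16]{BF13}, and noting that the compact embedding $\mathbb{X}_{\nu}(\Omega\vert\RR^N)\hookrightarrow L^2(\Omega)$ supplied by Theorem \ref{thm:compactness} is the key hypothesis for the Aubin--Lions step. Your additional care in checking separability, density, and the identification of $H_\nu(0,T)$ with the abstract space $W(0,T)$ is a reasonable elaboration of what the paper leaves implicit.
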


\medskip

\noindent Now we state the integration by parts formula contained in \cite{Foghem, DROV17} for smooth functions. Precisely for every $\phi, \psi \in C_c^\infty(\R^N)$ following nonlocal Gauss-Green formula holds true
\begin{align}\label{Int-Part}
\upxi(\phi,\psi)=
\int_{\Omega}\psi\mathscr{L}\phi(x)\;\dx +\int_{\Omc}\hspace{-2ex}\psi(y)\mathcal N \phi(y)\;\dy,
\end{align}
where,  $\mathcal{N}\phi $ denotes  the {\em nonlocal normal derivative $\mathcal N$} of $\phi$ across the boundary of  $\Omega$ with respect to $\nu$ and is defined by
\begin{align}\label{eq:nonlocal-normal-derivative}
\mathcal{N} \phi(x)\coloneqq \int_{\Omega}\left(\phi(x)-\phi(y)\right)\nu(x-y)\; \dy,~~~~x\in\RR^N\setminus\Om.
\end{align} 
\medskip

\noindent With the aforementioned function spaces at hand, we are now in position to define the notion of  weak solutions to  the problem \eqref{eq:main-equation}.
\begin{definition}
Let $\varphi$ satisfies Assumption~\ref{eq:assumption-phi} and $u_0\in L^2(\Omega)$. A function
$u:\Omega_T \to \mathbb{R}$ is said to be a weak solution of problem \eqref{eq:main-equation}, if
\begin{enumerate}[$(i)$]
\item
$u\in L^2(0,T;\mathbb{X}_{\nu}(\Omega \vert \RR^N))$ and  $\partial_t u\in L^2(0,T;\big( \mathbb{X}_{\nu}(\Omega \vert \RR^N)\big)^*)$; 
\item  for every $\psi\in C^1_c\big([0,T);\mathbb{X}_{\nu}(\Omega \vert \RR^N) \big)$ (i.e., $\psi(\cdot, T)=0$),  $u$ satisfies $u(\cdot, 0)=u_0$ and 
\begin{align}\label{eq:def-weak-solution} 
\int_\Omega  \partial_t u\,\psi\,\dx + \upxi(u,\psi) + \int_\Omega \varphi(v)\,u\,\psi\dx\, = 0\quad \text{for all $0\leq t\leq T$}.
\end{align}
In particular, we have 
\[
\int_0^T \int_\Omega   u\,\partial_t\psi\,\dx\, \dt + \int_0^T \upxi(u,\psi)\,\dt +\int_0^T \int_\Omega \varphi(v)\,u\,\psi\dx\, \dt = \int_\Omega u_0 \psi_0\d x.
\] 
\end{enumerate}
\end{definition}

\noindent By the density of $ C_c^\infty(\Omega)$ in $\mathbb{X}_{\nu}(\Omega \vert \RR^N)$, it is sufficient to take $\psi\in C_c^1([0,T); C_c^\infty(\Omega))$ as the test functions in \eqref{eq:def-weak-solution}.  Our proof of the existence of a weak solution to the problem \eqref{eq:main-equation} relies upon the following Tychonoff fixed-point Theorem \ref{thm:tychonoff} which is a generalization of the Brouwer and Schauder fixed-point theorems.

\begin{theorem}[Tychonoff \cite{Ty35}]\label{thm:tychonoff} Let $X$ be a reflexive separable Banach space. Let $G\subset X$ be closed convex and bounded set. Any weakly sequentially continuous map $\pi:G\to G$ has a fixed point. \linebreak[-2]
\end{theorem}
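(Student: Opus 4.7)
The plan is to reduce the statement to the classical Schauder--Tychonoff fixed-point theorem for a continuous self-map of a compact convex subset of a Hausdorff locally convex space, by equipping $G$ with the weak topology inherited from $X$. First, I would invoke Kakutani's theorem, which guarantees that the closed unit ball of the reflexive space $X$ is weakly compact, and combine it with Mazur's theorem (norm-closed convex sets are weakly closed) to conclude that the closed, convex, bounded set $G$ is itself weakly compact. Next, since $X$ is reflexive and separable, its dual $X^{*}$ is also separable, so by a standard result the weak topology $\sigma(X,X^{*})$ restricted to the bounded set $G$ is metrizable. Explicitly, picking a dense sequence $\{f_n\}_{n\in\NN}$ in the closed unit ball of $X^{*}$, the formula
\[
d(x,y)\coloneqq\sum_{n\in\NN} 2^{-n}\,\frac{|f_n(x-y)|}{1+|f_n(x-y)|}
\]
defines a metric on $G$ that induces the relative weak topology, and hence $(G,d)$ is a compact convex metric space.

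The second step is to upgrade the weak sequential continuity of $\pi$ to weak topological continuity on $G$. Since the weak topology on $G$ is metrizable, sequential continuity coincides with ordinary continuity, so $\pi:(G,d)\to(G,d)$ becomes continuous, with image contained in the compact set $G$.

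Finally, I would apply the Schauder--Tychonoff fixed-point theorem in its general form: every continuous self-map of a nonempty compact convex subset of a Hausdorff locally convex topological vector space admits a fixed point. Applying it to $\pi$, viewed as a continuous self-map of $G$ inside the locally convex space $(X,\sigma(X,X^{*}))$, produces a point $u\in G$ with $\pi(u)=u$, which is the desired fixed point.

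The delicate part is not any single computation but the clean interplay of three classical ingredients, namely weak compactness of closed bounded convex sets in reflexive spaces, metrizability of the weak topology on bounded sets when $X^{*}$ is separable, and the automatic passage from sequential to full continuity on metrizable spaces; together these allow a purely sequentially stated hypothesis on $\pi$ to be fed into a topological fixed-point theorem. In practice, when the theorem is used later in the paper, the only nontrivial task will be to verify, for the map $\pi$ constructed there, the weak sequential continuity hypothesis and the invariance $\pi(G)\subset G$.
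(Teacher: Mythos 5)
The paper does not prove Theorem \ref{thm:tychonoff} at all: it is quoted as a classical result with a citation to Tychonoff's 1935 paper, and the surrounding text only remarks that it specializes to the Schauder and Brouwer theorems. So there is no in-paper argument to compare yours against; what you have written is a self-contained proof, and it is correct. The three ingredients are assembled in the right order: Kakutani plus Mazur give weak compactness of $G$; reflexivity together with separability of $X$ gives separability of $X^{*}$ (via $X^{**}\cong X$), hence metrizability of the weak topology on the bounded set $G$ by the displayed metric (which separates points by Hahn--Banach and density of $\{f_n\}$ in the unit ball of $X^{*}$); metrizability upgrades weak sequential continuity of $\pi$ to continuity for the relative weak topology; and the Schauder--Tychonoff theorem for compact convex subsets of a Hausdorff locally convex space, applied in $(X,\sigma(X,X^{*}))$, yields the fixed point. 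The only cosmetic omission is the standing assumption that $G$ is nonempty, which the statement leaves implicit. This reduction is the standard route to the version of the theorem used in the paper.
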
 
\noindent We  emphasize that when $G$ is compact and convex, Theorem
\ref{thm:tychonoff} is known as the Schauder fixed-point theorem, while, when $X$ is of  finite dimension it is known as the Brouwer fixed-point theorem. 

\section{Nonlocal elliptic and parabolic problem}\label{Sec:auxiliaries}
The overreaching goal of this section is to investigate weak solutions to two specific nonlocal problems which is of interest in the proof of our main result. The first problem is an elliptic nonlocal problem and the second one is a parabolic nonlocal problem.

\subsection{Nonlocal elliptic problem}\label{sec:elliptic-problem}
Given a measurable function $f:\Omega\to \mathbb{R}$, we consider the elliptic problem consisting into finding a function $v:\R^N\to \mathbb{R}$ satisfying 
of the following problem:
\begin{align}\label{eq:elliptic-equation}
\begin{cases}
\mathscr{L} v + \varphi(v)\,v= f  &\mbox{ in }\; \Omega,\\
v =0 &\mbox{ on }\; \Omc. 
\end{cases}
\end{align}
Heuristically, the problem \eqref{eq:elliptic-equation} results from the evolution problem \eqref{eq:main-equation} by  integrating with respect to $t$ from $0$ to $T$. In a sense, the functions $v$ and $f$ correspond to $ \int_0^T u(\cdot,t)\,\dt$ and $u_0-u(\cdot,T)$, respectively.
Semilinear problems of type \eqref{eq:elliptic-equation} are considered in the classical scenario in \cite{BB78, Gos71,He73,St21}) with the operator $\mathscr L$ replaced with  $-\Delta$. There, the difficulties with the integrability of the term $\varphi(v)v$ were handled. In our case, we consider the function $f\in L^2(\Omega)$, so that we expect more from the solution of the problem such as $\varphi(v)\in L^2(\Omega)$. 
We need to introduce  the following notation $$\upchi(\tau)=\varphi(\tau)\tau.$$ A function $v\in \mathbb{X}_{\nu}(\Omega \vert \R^N)$ is said to be a weak solution of problem \eqref{eq:elliptic-equation} if $\upchi(v)\in L^2(\Omega)$ and 
\begin{align}\label{eq:weak-sol-elliptic}
\mathcal{E}(v,\psi)+(\upchi(v),\psi)=(f,\psi)\quad\text{for all}\quad\psi\in \mathbb{X}_{\nu}(\Omega \vert \R^N).
\end{align}
Next, we want to  show that the above variational problem \eqref{eq:weak-sol-elliptic} is well-posed in the sense of Hadamard. In other words, it possesses a unique solution which continuously depends upon the data. Let us start with the following stability lemma.

\begin{lemma}\label{lem:stability} Let $f_i\in L^2(\Omega)$, $i=1,2$. Assume that $v_i\in \mathbb{X}_{\nu}(\Omega \vert \R^N)$
	satisfies 
	\begin{align*}
	\mathcal{E}(v_i,\psi)+(\upchi(v_i),\psi)= (f_i,\psi)\quad\text{for all}\quad\psi\in \mathbb{X}_{\nu}(\Omega \vert \R^N).
	\end{align*}
	Then for some constant $C= C(N,\Omega, \nu)>0$ only depending only $N, \Omega$ and $\nu$ such that   
	\begin{align*}
	\|v_1-v_2\|_{\mathbb{X}_{\nu}(\Omega \vert \R^N)}\leq C 	\|f_1-f_2\|_{(\mathbb{X}_{\nu}(\Omega \vert \R^N))^*}.
	\end{align*}
\end{lemma}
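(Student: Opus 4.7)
My plan is to proceed by a standard energy/monotonicity argument, exploiting the monotonicity assumption on $\upchi(\tau)=\varphi(\tau)\tau$ built into Assumption~\ref{eq:assumption-phi}. First, I would subtract the two weak formulations to obtain
\[
\mathcal{E}(v_1-v_2,\psi)+(\upchi(v_1)-\upchi(v_2),\psi)=(f_1-f_2,\psi)
\]
for every $\psi\in\mathbb{X}_{\nu}(\Omega\vert\R^N)$. Then I would choose the admissible test function $\psi=v_1-v_2$, which lies in $\mathbb{X}_{\nu}(\Omega\vert\R^N)$ since both $v_1$ and $v_2$ do, and vanishes a.e.\ on $\Omc$.

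The crucial observation is that by Assumption~\ref{eq:assumption-phi} the map $\upchi$ is non-decreasing, so for every $a,b\in\R$ one has $(\upchi(a)-\upchi(b))(a-b)\geq 0$. Integrating this pointwise inequality over $\Omega$ with $a=v_1(x),b=v_2(x)$ gives $(\upchi(v_1)-\upchi(v_2),v_1-v_2)\geq 0$, and the nonlinear term can simply be dropped, leaving
\[
\mathcal{E}(v_1-v_2,v_1-v_2)\leq (f_1-f_2,v_1-v_2).
\]
Because $v_1-v_2$ vanishes on $\Omc$, the integrand of $\mathcal{E}$ is supported in $\mathcal{Q}(\Omega)$, so
\[
\mathcal{E}(v_1-v_2,v_1-v_2)=\tfrac{1}{2}\|v_1-v_2\|^{2}_{\mathbb{X}_\nu(\Omega\vert\R^N)}.
\]

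To close the estimate I would view $f_1-f_2\in L^2(\Omega)\hookrightarrow(\mathbb{X}_\nu(\Omega\vert\R^N))^\star$ via the Gelfand triple recalled in Section~\ref{Sec:preli} and apply the duality bound
\[
(f_1-f_2,v_1-v_2)\leq \|f_1-f_2\|_{(\mathbb{X}_\nu(\Omega\vert\R^N))^\star}\|v_1-v_2\|_{\mathbb{X}_\nu(\Omega\vert\R^N)},
\]
then divide by $\|v_1-v_2\|_{\mathbb{X}_\nu(\Omega\vert\R^N)}$ (treating the trivial case $v_1=v_2$ separately). This yields the stated inequality with the explicit constant $C=2$, but by Poincar\'{e}--Friedrichs \eqref{eq:poincareX} one may equivalently absorb the embedding constant into a $C=C(N,\Omega,\nu)$, matching the statement.

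I do not expect any serious obstacle: the argument is robust because it only uses bilinearity of $\mathcal{E}$, symmetry of $\nu$, and the monotonicity of $\upchi$. The only point requiring a word of justification is that the pairings $(\upchi(v_i),\psi)$ make sense when $\psi=v_1-v_2$; this follows from the definition of weak solution, which requires $\upchi(v_i)\in L^2(\Omega)$, together with the embedding $\mathbb{X}_\nu(\Omega\vert\R^N)\hookrightarrow L^2(\Omega)$.
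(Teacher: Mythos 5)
Your proposal is correct and follows essentially the same route as the paper: subtract the two weak formulations, test with $\psi=v_1-v_2$, discard the nonlinear term using the monotonicity $(\upchi(\tau_1)-\upchi(\tau_2))(\tau_1-\tau_2)\geq 0$ guaranteed by Assumption~\ref{eq:assumption-phi}, and conclude by duality together with the Poincar\'e--Friedrichs inequality \eqref{eq:poincareX}. Your extra care about the factor relating $\mathcal{E}(w,w)$ to $\|w\|^2_{\mathbb{X}_\nu(\Omega\vert\R^N)}$ for $w$ vanishing on $\Omc$ is a harmless refinement of a constant the paper absorbs into $C$.
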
 
\begin{proof}
	Combining both equation and testing with $\psi= v_1-v_2$ yields 
	\begin{align*}
	\mathcal{E}(v_1-v_2,v_1-v_2)+\big(\upchi(v_1)-\upchi(v_2), v_1-v_2\big)_{L^2(\Omega)}=(f_1-f_2, v_1-v_2)_{L^2(\Omega)}. 
	\end{align*}
	Observing that $\tau\mapsto \upchi(\tau)=\varphi(\tau)\tau$ is non-decreasing, is equivalent to saying that 
	\begin{align}\label{eq:monotone}
	(\upchi(\tau_1)-\upchi(\tau_2))(\tau_1-\tau_2)\geq 0\qquad\text{for all $\tau_1,\tau_2\in \R, $}
	\end{align}
	the above relation implies 
	\begin{align*}
	\|v_1-v_2\|^2_{\mathbb{X}_{\nu}(\Omega \vert \R^N)}\leq \|f_1-f_2\|_{(\mathbb{X}_{\nu}(\Omega \vert \R^N))^*}\|v_1-v_2\|_{\mathbb{X}_{\nu}(\Omega \vert \R^N)}.
	\end{align*}
	The desired estimate follows from the Poincar\'e-Friedrichs inequality \eqref{eq:poincareX}. 
\end{proof}

\vspace{1mm}
	
\noindent The next result  reminisces \cite[Lemma 1, Section 3.1]{St21} in the nonlocal setting. 
\begin{theorem}\label{thm:existence-elliptic}
	Let Assumption~\ref{eq:assumption-phi} be in force and let $f\in L^2(\Omega)$. Then the  problem \eqref{eq:elliptic-equation} has a unique weak solution $v\in \mathbb{X}_{\nu}(\Omega \vert \R^N)$. Moreover, the following estimates hold true: 
	\begin{enumerate}[$(i)$]
		\item
		$\mathcal{E}(v,v)\leq C\,\|f\|^{2}_{L^{2}(\Omega)}$ where $C>0$  only  depends on $N$, $\Omega$, and $\nu$;
		\item  $\|\varphi(v)\,v\|_{L^{2}(\Omega)}\leq\|f\|_{L^{2}(\Omega)}$; 
		\item
		$\|\varphi(v)\|^2_{L^{2}(\Omega)}\leq  \frac{1}{\delta^2} \|f\|^2_{L^2(\Omega)} + |\Omega|$, with $\delta>0$ only depending on $\varphi$. 
	\end{enumerate}
\end{theorem}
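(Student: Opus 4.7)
The plan is to combine a truncation of the nonlinearity with the Browder--Minty theorem for monotone coercive hemicontinuous operators, and then pass to the limit using the compact embedding of Theorem~\ref{thm:compactness}. Uniqueness is immediate from Lemma~\ref{lem:stability} applied with $f_1=f_2=f$.

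For existence I truncate: set $\upchi_n(\tau)=\upchi(T_n(\tau))$ with $T_n(\tau)=\max(-n,\min(n,\tau))$. By Assumption~\ref{eq:assumption-phi}, $\upchi_n$ is a bounded, non-decreasing, globally Lipschitz function with $\upchi_n(0)=0$; set $\varphi_n(\tau)=\upchi_n(\tau)/\tau$ for $\tau\neq 0$ and $\varphi_n(0)=0$. The operator $A_n\colon \mathbb{X}_{\nu}(\Omega\vert\R^N)\to (\mathbb{X}_{\nu}(\Omega\vert\R^N))^*$ given by $\langle A_n v,\psi\rangle = \mathcal{E}(v,\psi) + (\upchi_n(v),\psi)_{L^2(\Omega)}$ is hemicontinuous (from continuity of $\upchi_n$), coercive (via $\upchi_n(v)v\geq 0$ and the Poincar\'e--Friedrichs inequality \eqref{eq:poincareX}), and monotone (via \eqref{eq:monotone} for $\upchi_n$ and nonnegativity of $\mathcal{E}$). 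The Browder--Minty theorem therefore yields a unique $v_n\in \mathbb{X}_{\nu}(\Omega\vert\R^N)$ solving $A_n v_n=f$. Testing with $\psi=v_n$ and using Poincar\'e--Friedrichs produces $\mathcal{E}(v_n,v_n)\leq C\|f\|_{L^2}^2$. The key step is to test with $\psi=\upchi_n(v_n)$, which is admissible since $\upchi_n$ is Lipschitz with $\upchi_n(0)=0$: monotonicity of $\upchi_n$ makes the integrand in $\mathcal{E}(v_n,\upchi_n(v_n))$ pointwise nonnegative, so $\mathcal{E}(v_n,\upchi_n(v_n))\geq 0$ and consequently $\|\upchi_n(v_n)\|_{L^2}\leq \|f\|_{L^2}$ uniformly in~$n$.

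I then pass to the limit. Up to a subsequence, $v_n\rightharpoonup v$ in $\mathbb{X}_{\nu}(\Omega\vert\R^N)$ and, by Theorem~\ref{thm:compactness}, $v_n\to v$ strongly in $L^2(\Omega)$ and a.e. For a.e.\ $x$, once $n\geq |v_n(x)|$ one has $\upchi_n(v_n(x))=\upchi(v_n(x))\to \upchi(v(x))$ by continuity of $\upchi$; combined with the uniform $L^2$ bound this forces $\upchi_n(v_n)\rightharpoonup \upchi(v)$ weakly in $L^2(\Omega)$ (via Egorov/Vitali and uniqueness of the weak limit). Letting $n\to\infty$ in the weak formulation of $A_n v_n=f$ recovers \eqref{eq:weak-sol-elliptic}, and $\upchi(v)\in L^2(\Omega)$. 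Estimate~(i) is then weak lower semicontinuity of $\mathcal{E}$ and (ii) lower semicontinuity of $\|\cdot\|_{L^2}$. For (iii), continuity of $\varphi$ at $0$ together with $\varphi(0)=0$ provides $\delta>0$ (depending only on $\varphi$) such that $\varphi(\tau)\leq 1$ for $|\tau|\leq \delta$; splitting $\Omega$ according to $\{|v|\leq \delta\}$ and $\{|v|>\delta\}$ and using $\varphi(v)\leq |\upchi(v)|/\delta$ on the latter set yields
\[
\|\varphi(v)\|_{L^2(\Omega)}^2 \leq |\Omega|+\tfrac{1}{\delta^2}\|\upchi(v)\|_{L^2(\Omega)}^2\leq |\Omega|+\tfrac{1}{\delta^2}\|f\|_{L^2(\Omega)}^2
\]
via~(ii).

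The main obstacle is that $\upchi$ is only locally Lipschitz, so $\upchi(v)$ is not a priori admissible as a test function in $\mathbb{X}_{\nu}(\Omega\vert\R^N)$; the truncation circumvents this, and the resulting $n$-uniform $L^2$ bound on $\upchi_n(v_n)$---driven by the nonlocal monotonicity identity $\mathcal{E}(v_n,\upchi_n(v_n))\geq 0$---is what closes the argument. The second delicate point is identifying the weak $L^2$ limit of the nonlinearity $\upchi_n(v_n)$, which crucially requires the strong $L^2$ convergence of $v_n$ supplied by Theorem~\ref{thm:compactness} combined with the uniform $L^2$ bound.
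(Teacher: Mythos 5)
Your proof is correct, and it takes a genuinely different route from the paper's. The paper runs a Galerkin scheme: it projects onto finite-dimensional subspaces $\mathcal{V}_k$ spanned by \emph{bounded} basis functions, solves the projected problem via minimization of an energy functional (or, alternatively, Lax--Milgram plus the Brouwer fixed-point theorem), obtains only the weaker a priori bound $\int_\Omega \upchi(v_k)v_k\,\dx\le R^2$, and must then invoke uniform integrability together with the Vitali convergence theorem to pass the nonlinearity to the limit in $L^1(\Omega)$; the $L^2$-estimate $(ii)$ is recovered only \emph{a posteriori} by truncating the limit solution $v$ to $\bar v_\ell$, testing with $\upchi(\bar v_\ell)$, and applying Fatou. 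You instead truncate the nonlinearity itself, solve the regularized problem in one stroke by Browder--Minty (monotonicity, coercivity and hemicontinuity of $A_n$ are all straightforward from \eqref{eq:monotone} and \eqref{eq:poincareX}), and crucially obtain the $n$-uniform bound $\|\upchi_n(v_n)\|_{L^2}\le\|f\|_{L^2}$ already at the approximate level by testing with $\upchi_n(v_n)$ — the same nonlocal monotonicity inequality $\mathcal{E}(v_n,\upchi_n(v_n))\ge 0$ that the paper uses only in its Step 3. This stronger bound lets you replace the Vitali/$L^1$ argument by the elementary fact that an $L^2$-bounded, a.e.\ convergent sequence on a finite measure set converges weakly in $L^2$, and it delivers $(ii)$ for free by weak lower semicontinuity. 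What your route buys is a cleaner limit passage and no need for the (slightly delicate) claim that $\mathbb{X}_\nu(\Omega|\R^N)$ admits an orthonormal basis of bounded functions; what it costs is the heavier surjectivity theorem for monotone operators in place of finite-dimensional fixed-point arguments. Both proofs rely on the same compact embedding of Theorem~\ref{thm:compactness} (hence implicitly on $\nu\notin L^1(\R^N)$, an assumption the paper also uses without restating it in the theorem), and your treatment of uniqueness and of estimates $(i)$ and $(iii)$ coincides with the paper's.
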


\smallskip

\begin{proof}
	Note that the uniqueness immediately follows from Lemma \ref{lem:stability}.  We prove the remaining results of Theorem \ref{thm:existence-elliptic} in several steps. Our proof  follows that of \cite[Lemma 1, Section 3.1]{St21}.
	\vspace{1mm}
	
	\noindent {\bf Step 1:} We are interested in establishing the well-posedness of problem \eqref{eq:elliptic-equation} using the Galerkin method which consists into projecting the latter on suitable finite dimensional space.  First of all, we  mention that bounded functions are dense in $V_\nu(\Omega|\R^N)$ and hence in $\mathbb{X}_\nu(\Omega|\R^N)$. Thus, there is an orthonormal basis $\{\phi_k\}$  of $\mathbb{X}_\nu(\Omega|\R^N)$ whose elements are bounded, i.e., $\phi_k\in L^\infty(\Omega)$. 
	
	\noindent   We emphasize that the inner product in $\mathbb{X}_{\nu}(\Omega \vert \R^N)$  is defined as $(\psi_1,\psi_2)_{\mathbb{X}_{\nu}(\Omega \vert \R^N)}= \mathcal{E}(\psi_1,\psi_2)$ for $\psi_1$, $\psi_2\in \mathbb{X}_{\nu}(\Omega \vert \R^N)$. 
	Let $\mathcal{V}_{k}$ be the subspace of $\mathbb{X}_\nu(\Omega|\R^N)$ spanned by the basis functions $\{\phi_1,\ldots, \phi_{k}\}$. For each $k\in \mathbb{N}$, we claim the existence of a function  $v_k\in \mathcal{V}_{k}$ such that
	\begin{align}\label{eq:weak-finite-dim}
	\mathcal{E}(v_{k},\psi) +(\upchi(v_k),\psi)=(f,\psi)\quad\text{for all}\quad\psi\in \mathcal{V}_{k}.
	\end{align}
	We prove this in two different ways. First, note that \eqref{eq:weak-finite-dim} is equivalent to the minimization problem
	\begin{align*}
	\mathcal{J}(v_k)=  \min_{w\in \mathcal{V}_k}  \mathcal{J}(w)\quad \text{with}\quad  \mathcal{J}(w):=  \frac12 \mathcal{E}(w,w) + \int_\Omega G(w)\d x + \int_\Omega fw\d x
	\end{align*}
	where we  define the function $G(w)= \int_0^w\chi(\tau)\d \tau= \int_0^w\varphi(\tau)\tau \d \tau $.  Note that $G$ is non-negative since $\varphi(\tau)\geq 0$ and that the mapping $w\mapsto \mathcal{J}(w)$ is continuous on $\mathcal{V}_k$. Furthermore, with the aid of the Poincar\'e-Friedrichs inequality \eqref{eq:poincareX} we find that $\mathcal{J}(w)\to \infty$, as $\|w\|_{\mathbb{X}_\nu(\Omega|\R^N)} \to \infty$ and $w\in \mathcal{V}_k$. Since $\dim \mathcal{V}_k<\infty$, the existence of a minimizer $v_k\in\mathcal{V}_k$ of $\mathcal{J}$ springs from folklore arguments. 

	\vspace{1mm}
	\noindent Alternatively, as highlighted in \cite{St21},  we obtain the existence of $v_k$ using  the Brouwer fixed-point theorem as follows. Let $w\in \mathcal{V}_k$, necessarily $\varphi(w)$ is a bounded function since $\phi_k $'s are also bounded. The Lax-Milgram lemma implies there is a unique function $\widehat{w}\in  \mathcal{V}_k$
	such that 
	\begin{align*}
	\mathcal{E}(\widehat{w},\psi) +(\varphi(w)\widehat{w},\psi)=(f,\psi)\quad\text{for all}\quad\psi\in \mathcal{V}_{k}.
	\end{align*}
	
	\noindent In particular, the Poincar\'{e}--Friedrichs inequality \eqref{eq:poincareX} yields
	\begin{align*}
	\begin{aligned}
	\mathcal{E}(\widehat{w}, \widehat{w}) + \int_\Omega \varphi(w )\widehat{w}^2\,\dx\leq &\|f\|_{L^{2}(\Omega)}\,\|\widehat{w}\|_{L^{2}(\Omega)}\leq  C\|f\|_{L^{2}(\Omega)}\, \|\widehat{w}\|_{\mathbb{X}_{\nu}(\Omega \vert \R^N)} 
	\end{aligned}
	\end{align*}
	Thus, letting $R=C\,\|f\|_{L^{2}(\Omega)}$, since $\varphi\geq0 $ we obtain the following estimates
	\begin{align}\label{eq:boundedmapT}
	&\|\widehat{w}\|_{\mathbb{X}_{\nu}(\Omega \vert \R^N)}\leq R
	\quad\text{ and }\quad
	\int_\Omega \varphi(w )\widehat{w}^2\,\dx\leq R^2.
	\end{align}
	We let $ \mathcal{B}_R=\big\{ w\in \mathcal{V}_k: \|w\|_{\mathbb{X}_\nu(\Omega|\R^N)} \leq R\big\}$, be the closed ball in $\mathcal{V}_k$ of radius $R$ centered at the origin.  Clearly, \eqref{eq:boundedmapT} implies that the mapping $T:\mathcal{V}_k\to \mathcal{B}_R$ with $Tw=\widehat{w}$  is well defined. 
	It remains to prove that $T$ is a continuous mapping. Indeed, let $\{w_n\}$ be a sequence in $\mathcal{V}_k$ with $w_n= \lambda_{1,n}\phi_1+\cdots+  \lambda_{k,n}\phi_k$ converging in $\mathcal{V}_k$ to a function $w= \lambda_1\phi_1+\cdots+  \lambda_k\phi_k$, i.e., $\lambda_{\ell,n}\xrightarrow{n\to\infty } \lambda_\ell$, $\ell=1,2,\cdots,k$. 
	By continuity we have $ \varphi(w_n)\xrightarrow{n\to \infty}\varphi(w)$ almost everywhere. In addition, the convergence in $L^2(\Omega)$ also holds, i.e., $ \|\varphi(w_n)-\varphi(w)\|_{L^2(\Omega)}\xrightarrow{n\to \infty}0$ since the continuity gives $\sup_{n\geq 0} \|\varphi(w_n) \|_{L^\infty(\Omega)} <\infty$ because $\sup_{n\geq 0}\|w_n\|_{L^\infty(\Omega) } <\infty$. On the other side, in virtue of  the first estimate in \eqref{eq:boundedmapT}, the sequence 
	$\{Tw_n\}$ is bounded in finite dimensional space $\mathcal{V}_k$ and thus converges in $\mathcal{V}_k$ up to a subsequence  to some $w_*\in \mathcal{V}_k$. Altogether, it follows that, for all  $\psi\in \mathcal{V}_k\subset L^\infty(\Omega)$ 
	\begin{align*}
	(f,\psi)= \lim_{n\to \infty} \mathcal{E}(\widehat{w}_n,\psi) 
	+(\varphi(w_n)\widehat{w}_n,\psi)=
	\mathcal{E}(w_*,\psi) +(\varphi(w)w_*, \psi). 
	\end{align*}
	The uniqueness of $\widehat{w}$ entails that  $w_*=\widehat{w}=Tw$ and hence  the whole sequence $\{Tw_n\}$ converges in $Tw$ in $\mathcal V_k$, which gives the continuity of $T$. 
	Therefore, by the Brouwer fixed-point theorem,  $T$ has a  fixed point $v_k\in \mathcal V_k$, i.e., $v_k=Tv_k$ which clearly  satisfies \eqref{eq:weak-finite-dim} as announced.   
	
	\noindent To continue, we must show that a subsequence of $\{v_k\}$ converges in $L^2(\Omega)$. To do this, we recall that $R= C\,\|f\|_{L^{2}(\Omega)}$ so that  from \eqref{eq:boundedmapT} we get the following estimates
	\begin{align} \label{eq:bound-chi}
	\|v_k\|_{\mathbb{X}_{\nu}(\Omega \vert \R^N)}\leq R
	\quad \text{and}\quad
	\int_\Omega \upchi(v_k)\, v_k\, \dx \leq R^2 \qquad\text{ for all $k\in \mathbb{N}$}.
	\end{align}
	Therefore, the sequence $\{v_k\}$ is clearly bounded in $\mathbb{X}_{\nu}(\Omega \vert \R^N)$. The compactness Theorem \ref{thm:compactness} yields the existence of a subsequence, still denoted by $\{v_k\}$,  converging weakly in $\mathbb{X}_{\nu}(\Omega \vert \R^N)$, strongly  in $L^{2}(\Omega)$ and almost everywhere in $\Omega$ to a function $v$. Wherefore, due to the continuity of $\upchi$, we get
	\begin{align}\label{eq:chi-conv-a.e}
	\upchi(v_k)\to\upchi(v)\quad \text{almost everywhere in} ~~\Omega.
	\end{align}
	
	\noindent {\bf Step 2:} Next, we prove that the functions $\{\upchi(v_k)\}$ are uniformly integrable. 
	In view of the estimate \eqref{eq:bound-chi},  for each measurable set $\Gamma\subset\Omega$ and each $\Lambda>0$, we  let
	$\Gamma_\Lambda^k=\{x\in \Gamma\,:\, |v_k(x)|\ge \Lambda\}$ so that
	\[
	\int_{\Gamma_\Lambda^k}|\upchi(v_k)|\,\dx \le \frac{1}{\Lambda}\int_{\Omega}\upchi(v_k)\, v_k\, \dx\le \frac{R^2}{\Lambda}.
	\]
	
	\noindent Since $\upchi$ is  non-decreasing, putting  $\upgamma(\Lambda)=\Lambda\max \{\varphi(-\Lambda),\varphi(\Lambda)\}$, we get  
	\begin{align*}
	|\upchi(\tau)|\le \upgamma(\Lambda) \quad \text{for all} \quad \tau\in[-\Lambda,\Lambda]. 
	\end{align*}
	Therefore, the following relation holds
	\[
	\int_{\Gamma\setminus \Gamma_\Lambda^k} |\upchi(v_k)|\,\dx \le \upgamma(\Lambda)\,|\Gamma|,
	\]
	where $|\Gamma|$ is the Lebesgue measure of the set $\Gamma$. These inequalities imply that
	\[
	\int_\Gamma|\upchi(v_k)|\,\dx\le \frac{R^2}{\Lambda} + \upgamma(\Lambda)\,|\Gamma|.
	\]
Thus, for an arbitrary $\varepsilon >0$, we take $\Lambda=2R^2/\varepsilon$ and $\updelta=\varepsilon/(2\upgamma(\Lambda))$.
	Therefore, we find that
	$$\sup_{k\geq1}\int_{\Gamma}|\upchi(v_k)|\,\dx< \varepsilon$$ for an arbitrary measurable set $\Gamma\subset\Omega$ such that $|\Gamma|<\delta$. This, is precisely the uniform integrability of $\upchi(v_k)$.
	This fact together with \eqref{eq:chi-conv-a.e} and the Vitali convergence theorem (see, e.g., \cite[Theorem A.19]{Foghem})
	enable us to conclude that $\upchi(v) \in L^1(\Omega)$ and
	$\upchi(v_k)\to \upchi(v)$ in $L^1(\Omega)$ as $k\to\infty$.
	Now passing to the limit in \eqref{eq:weak-finite-dim} as $k\to\infty$ we find that $v$ satisfies \eqref{eq:weak-sol-elliptic}, which along  with Lemma \ref{lem:stability}, means that $v$ is a unique weak solution of problem~\eqref{eq:elliptic-equation}.\medskip
	
	\noindent {\bf Step 3:} We prove the estimates in $(i), (ii)$ and  $(iii)$. 
The estimate (i) follows from the first inequality in \eqref{eq:bound-chi} since the weak convergence of $(v_k)_k$ implies 
$$\mathcal{E}(v,v)\leq \liminf\limits_{k\to \infty}\mathcal{E}(v_k,v_k)\leq C\|f\|^2_{L^2(\Omega)}.$$
Next, let us consider the truncation  $\bar{v}_\ell= \max(-\ell, \min(\ell ,v)),$  $\ell\geq 1$. Then $\bar{v}_\ell\in \mathbb{X}_{\nu}(\Omega \vert \R^N)$ since $(v(x)-v(y)) (\bar{v}_\ell(x)-\bar{v}_\ell(y)\geq |\bar{v}_\ell(x)-\bar{v}_\ell(y)|^2$. The latter inequality and \eqref{eq:monotone} imply $\mathcal{E}(v,\upchi(\bar{v}_\ell)) \geq \mathcal{E}(\bar{v}_{\ell},\upchi(\bar{v}_\ell)) \geq 0$.  Moreover, $|\upchi(\bar{v}_\ell)(x)-\upchi(\bar{v}_\ell)(y)|\leq c |\bar{v}_\ell(x)-\bar{v}_\ell(y)|$ since $\upchi$ is Lipschitz on $[-\ell,\ell]$ and $|\bar{v}_\ell|\leq\ell$.  Thus, $\upchi(\bar{v}_\ell)\in \mathbb{X}_{\nu}(\Omega \vert \R^N)$.
	Noticing that, $\upchi(v)\upchi(\bar{v}_\ell)\ge \upchi^2(\bar{v}_\ell),$  taking $\psi=\upchi(\bar{v}_\ell)$ in \eqref{eq:weak-sol-elliptic} yields
	\begin{align*}
	\|\upchi(\bar{v}_\ell)\|^2_{L^2(\Omega)} 
	&\leq \mathcal{E}(v,\upchi(\bar{v}_\ell)) +(\upchi(v), \upchi(\bar{v}_\ell)) 
	\le \|f\|_{L^2(\Omega)} \|\upchi(\bar{v}_\ell)\|_{L^2(\Omega)}.
	\end{align*}

\noindent Thus, since $\{\upchi^2(\bar{v}_\ell)\}\to\upchi^2(v)$ a.e. in $\Omega$, Fatou's lemma implies  the second estimate as follows  
\begin{align*}
	\|\upchi(v)\|_{L^2(\Omega)} \leq \liminf_{\ell\to \infty}\|\upchi(\bar{v}_\ell)\|_{L^2(\Omega)} \leq \|f\|_{L^2(\Omega)}.
	\end{align*}
	
	\noindent Finally, by continuity of  $\varphi$, there exists $\delta>0$ such that $\varphi^2(\tau)\leq 1$ for all $\tau\in[-\delta,\delta]$. Hence,  letting $\Gamma_{\delta}=\{x\in\Omega\;:\; |v(x)|\ge \delta\}$,  the second estimate implies the third one as follows 
	\begin{align*}
	\int_{\Omega} \varphi^2(v)\,\dx&= \int_{\Gamma_\delta} \varphi^2(v)\,\dx +\int_{\Omega\setminus \Gamma_\delta} \hspace{-2ex}\varphi^2(v)\,\dx 
	\leq \frac{1}{\delta^2}\int_\Omega \varphi^2(v)\,v^2\,\dx+\int_{\Omega\setminus \Gamma_\delta}\hspace{-2ex} \varphi^2(v)\,\dx  \leq \frac{1}{\delta^2}\|f\|^2_{L^2(\Omega)} +|\Omega|.
	\end{align*}
\end{proof}

\vspace{1mm}

\noindent Next, we define the mapping $V:L^2(\Omega)\to \mathbb{X}_{\nu}(\Omega \vert \R^N)$ such that, for $f\in L^2(\Omega)$, 
\begin{align}\label{eq:def-v}
\text{$v=V(f)$ is the unique weak solution of problem \eqref{eq:elliptic-equation}.}
\end{align} 
We derive in the lemma below, some convergence results for the sequence $\{V(f_k)\}$ which are decisive for the  application the Tychonoff fixed-point Theorem \ref{thm:tychonoff}.
\begin{lemma}\label{lem:weak-convergence}
	Assume  that $f_k\rightharpoonup f$ weakly $L^2(\Omega)$ and  let $v_k=V(f_k)$ and $v=V(f)$. Then  it holds that  $v_k\to v$ strongly in $\mathbb{X}_{\nu}(\Omega \vert \RR^N)$ and  $\varphi(v_k)\rightharpoonup \varphi(v)$ weakly in $L^2(\Omega)$.
\end{lemma}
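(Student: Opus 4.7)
My overall strategy is to first extract a converging subsequence using compactness, identify the limit as $V(f)$ via uniqueness, and then upgrade weak convergence to strong convergence in $\mathbb{X}_\nu(\Omega\vert\R^N)$ by testing the difference equation against $v_k-v$ and exploiting the monotonicity of $\upchi$.

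First, since $\{f_k\}$ converges weakly in $L^2(\Omega)$, it is bounded in $L^2(\Omega)$. Estimate~$(i)$ of Theorem~\ref{thm:existence-elliptic} then gives that $\{v_k\}$ is bounded in $\mathbb{X}_\nu(\Omega\vert\R^N)$, while estimates~$(ii)$ and~$(iii)$ imply that $\{\upchi(v_k)\}$ and $\{\varphi(v_k)\}$ are bounded in $L^2(\Omega)$. By the compact embedding (Theorem~\ref{thm:compactness}), up to a subsequence $v_k \rightharpoonup v_*$ weakly in $\mathbb{X}_\nu(\Omega\vert\R^N)$, $v_k \to v_*$ strongly in $L^2(\Omega)$, and $v_k \to v_*$ almost everywhere in $\Omega$. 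Continuity of $\varphi$ (and of $\upchi$) then yields $\varphi(v_k)\to \varphi(v_*)$ and $\upchi(v_k)\to \upchi(v_*)$ a.e. Combined with the uniform $L^2$-boundedness, the standard fact that a.e.\ convergence together with boundedness in $L^p$ for $p>1$ implies weak $L^p$-convergence to the a.e.\ limit gives $\varphi(v_k)\rightharpoonup \varphi(v_*)$ and $\upchi(v_k)\rightharpoonup\upchi(v_*)$ weakly in $L^2(\Omega)$.

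Next I pass to the limit in the identity
\begin{equation*}
\mathcal{E}(v_k,\psi) + (\upchi(v_k),\psi) = (f_k,\psi), \qquad \psi \in \mathbb{X}_\nu(\Omega\vert\R^N).
\end{equation*}
The first term converges to $\mathcal{E}(v_*,\psi)$ by weak convergence in $\mathbb{X}_\nu$, the second to $(\upchi(v_*),\psi)$ by weak $L^2$-convergence, and the right-hand side to $(f,\psi)$ by the assumed weak convergence of $\{f_k\}$. Thus $v_*$ solves \eqref{eq:weak-sol-elliptic} with datum $f$, and the uniqueness part of Theorem~\ref{thm:existence-elliptic} forces $v_*=v=V(f)$. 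Since every subsequence of $\{v_k\}$ admits a further subsequence converging to the same limit $v$, Urysohn's subsequence principle ensures that the whole sequence converges (weakly in $\mathbb{X}_\nu$, strongly in $L^2$, a.e.), and likewise $\varphi(v_k)\rightharpoonup \varphi(v)$ in $L^2(\Omega)$, which is the second conclusion of the lemma.

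To promote weak convergence in $\mathbb{X}_\nu$ to strong convergence, I subtract the equations for $v_k$ and $v$ and test with $\psi = v_k - v \in \mathbb{X}_\nu(\Omega\vert\R^N)$, obtaining
\begin{equation*}
\mathcal{E}(v_k-v, v_k-v) + \big(\upchi(v_k)-\upchi(v), v_k-v\big)_{L^2(\Omega)} = (f_k-f, v_k-v)_{L^2(\Omega)}.
\end{equation*}
By the monotonicity property \eqref{eq:monotone}, the second term on the left is non-negative, so
\begin{equation*}
\mathcal{E}(v_k-v,v_k-v) \leq (f_k-f, v_k-v)_{L^2(\Omega)} \leq \|f_k-f\|_{L^2(\Omega)}\,\|v_k-v\|_{L^2(\Omega)}.
\end{equation*}
The sequence $\{f_k - f\}$ is bounded in $L^2(\Omega)$ while $\|v_k-v\|_{L^2(\Omega)}\to 0$, so the right-hand side tends to $0$. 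Combined with the Poincar\'e--Friedrichs inequality~\eqref{eq:poincareX}, this forces $\|v_k-v\|_{\mathbb{X}_\nu(\Omega\vert\R^N)}\to 0$, yielding the desired strong convergence.

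The only delicate step is justifying the weak $L^2$-convergence of $\varphi(v_k)$ and $\upchi(v_k)$ from a.e.\ convergence plus $L^2$-boundedness; this is the standard consequence of reflexivity of $L^2$ combined with identification of the weak limit through the a.e.\ limit (alternatively via Vitali's theorem, as already used in the proof of Theorem~\ref{thm:existence-elliptic}). The rest is a routine monotonicity argument in the spirit of Lemma~\ref{lem:stability}.
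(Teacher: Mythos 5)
Your proof is correct, but it reaches the strong convergence by a genuinely different route than the paper. The paper's argument first shows that $f_k\to f$ strongly in the dual space $(\mathbb{X}_{\nu}(\Omega\vert\R^N))^*$: it picks norming elements $w_k$ in the unit ball of $\mathbb{X}_{\nu}(\Omega\vert\R^N)$ realizing $\|f_k-f\|_{(\mathbb{X}_{\nu}(\Omega\vert\R^N))^*}=\int_\Omega(f_k-f)w_k\,\dx$, uses the compact embedding to send $w_k\to w$ strongly in $L^2(\Omega)$, and concludes by pairing weak and strong convergence; the stability Lemma~\ref{lem:stability} then gives $v_k\to v$ in $\mathbb{X}_{\nu}(\Omega\vert\R^N)$ in one line. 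You instead apply compactness directly to the bounded sequence $\{v_k\}$, identify the weak limit as $V(f)$ by passing to the limit in the variational identity (which requires the weak $L^2$ convergence of $\upchi(v_k)$ that you correctly extract from a.e.\ convergence plus the $L^2$ bound of Theorem~\ref{thm:existence-elliptic}$(ii)$), and then upgrade to strong convergence by testing the difference of the equations with $v_k-v$ and discarding the monotone term — essentially re-proving Lemma~\ref{lem:stability} with the right-hand side estimated as $\|f_k-f\|_{L^2(\Omega)}\|v_k-v\|_{L^2(\Omega)}\to 0$. Both arguments hinge on the same compact embedding of Theorem~\ref{thm:compactness}; the paper's is shorter because it reuses the stability lemma, while yours is self-contained and delivers the strong $L^2$ and a.e.\ convergence needed for the $\varphi(v_k)$ claim as a by-product. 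The treatment of $\varphi(v_k)\rightharpoonup\varphi(v)$ is essentially the same in both (the paper invokes Banach--Saks where you invoke the standard a.e.-plus-$L^p$-bound fact). Two cosmetic points: the Poincar\'e--Friedrichs inequality is not actually needed in your last step, since $\mathcal{E}(\cdot,\cdot)^{1/2}$ already is (equivalent to) the norm on $\mathbb{X}_{\nu}(\Omega\vert\R^N)$; and Urysohn's principle does not literally give a.e.\ convergence of the whole sequence (a.e.\ convergence is not topological), though you never need that statement.
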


\begin{proof}
	Let us identify $f_k-f$ in $  (\mathbb{X}_{\nu}(\Omega \vert \R^N))^*$ with the linear  form
	\begin{align*}
	w\mapsto \int_\Omega (f_k(x)-f(x))w(x)\d x.
	\end{align*}
By the reflexivity of $\mathbb{X}_{\nu}(\Omega \vert \RR^N)$,
 there exists $w_k\in X_{\nu}(\Omega \vert \RR^N)$  (cf. \cite[Theorem 2]{Ja64}, \cite[Chapter 6]{KK11} or \cite[page 60]{Che13}) such that  
$\|w_k\|_{\mathbb{X}_{\nu}(\Omega \vert \RR^N)} \leq 1$ and 
	\begin{align*}
	\|f_k-f\|_{\mathbb{X}_{\nu}((\Omega \vert \R^N))^*} = \int_\Omega (f_k(x)-f(x)) w_k(x)\d x.
	\end{align*}
	According to the compactness Theorem \ref{thm:compactness} we may assume that $\{w_k\}$ strongly converges to  some $w$ in $L^2(\Omega)$. Therefore, the weakly convergence of $\{f_k\}$ implies that
	\begin{align*}
	\|f_k-f\|_{\mathbb{X}_{\nu}((\Omega \vert \R^N))^*} = \int_\Omega (f_k(x)-f(x)) w_k(x)\d x\xrightarrow{k\to \infty}0. 
	\end{align*}
	The convergence in $\mathbb{X}_{\nu}(\Omega \vert \R^N)$ follows immediately from Lemma \ref{lem:stability} since
	\begin{align*}
	\|v_k-v\|_{\mathbb{X}_{\nu}(\Omega \vert \R^N)}\leq C 	\|f_k-f\|_{(\mathbb{X}_{\nu}(\Omega \vert \R^N))^*}\xrightarrow{k\to \infty}0. 
	\end{align*}
	On the other hand, we also have the strong  convergence of $\{v_k\}$
	in $L^2(\Omega)$ and the continuity of $\varphi$ imply  that $\{\varphi(v_k)\}$ converges almost everywhere to $ \varphi(v)$ up a subsequence. Furthermore, since $\{f_k\}$ is bounded, as in the proof of Lemma \ref{eq:elliptic-equation}, one easily gets that 
	\begin{align*}
	\|\varphi(v_k)\|_{L^2(\Omega)}\leq C\quad\text{for all $k\geq 1$,}
	\end{align*}
	for a constant $C>0 $ independent on $k$. Thus, $\{\varphi(v_k)\}$ has a further subsequence weakly converging in $L^2(\Omega)$. The Banach-Saks Theorem, see \cite[Appendix A]{MR12} or \cite[Proposition 10.8]{Ponce16},  infers the  existence of a further subsequence whose  C\'esaro mean converges strongly in $L^2(\Omega)$ and almost everywhere  in $\Omega$ to the same limit.  Necessarily,  since $\{\varphi(v_k)\}$ converges almost everywhere to $ \varphi(v)$, the entire sequence $\{\varphi(v_k)\}$ weakly converges in $L^2(\Omega)$ to $\varphi(v)$. 
\end{proof}

\subsection{Nonlocal parabolic problem}\label{sec:parabolic}
We consider the following parabolic problem:
\begin{align}\label{eq:weight-parabolic}
\begin{cases}
\partial_t u +\mathscr{L} u + \zeta u=0 &\mbox{ in }\; \Omega_T,\\
u = 0 &\mbox{ in }\; \Sigma , \\
u(\cdot,0) = u_{0}, &\mbox{ in }\; \Omega,
\end{cases}
\end{align}
where $u_0 , \zeta\in L^2(\Omega)$ with  $\zeta \geq 0$. We also assume $\nu\not\in L^1(\R^N)$ so that by Theorem \ref{thm:compactness},  the embedding $\mathbb{X}_{\nu}(\Omega \vert \R^N) \hookrightarrow L^2(\Omega)$ is compact.   
Therefore, by the standard Galerkin superposition  method (see for instance \cite[Section 4.6]{Foghem}), a weak solution $u$ of the problem \eqref{eq:weight-parabolic} can be easily obtained in $L^2\big(0,T;\mathbb{X}_{\nu}(\Omega \vert \R^N) \cap L^2(\Omega,\zeta)\big)$. Here $L^2(\Omega,\zeta)$ is the Hilbert space with the norm 
\begin{align*}
\|u\|_{L^2(\Omega, \zeta)}^2=\int_\Omega |u(x)|^2\,\zeta(x)\dx.
\end{align*}
We omit the proof as well as various justifications (see also \cite{Ev10,GGZ74}).  
Another possibility, is to observe that (see \cite{Ka13}) there exists a unique semigroup  with generator $A$ on $L^2(\Omega)$ associated to the closed bilinear form $a(u,v)= (u, v)_{\mathbb{X}_{\nu}(\Omega \vert \R^N)} + (u,v)_{L^2(\Omega, \zeta)}$, with $u, v\in \mathbb{X}_{\nu}(\Omega \vert \R^N) \cap L^2(\Omega, \zeta) $,  such that $ a(u,v) = \langle Au, v\rangle$. Thus $u(x,t)= e^{-tA}u_0(x), \, \, 0\leq t\leq T$, is the unique weak solution to \eqref{eq:weight-parabolic}. The weak solution of problem \eqref{eq:weight-parabolic} satisfies the energy estimate:
\begin{align}\label{eq:weight-para-esti}
\frac{1}{2}\, \|u(\cdot, t)\|^2_{L^2(\Omega)} +\int_0^t \mathcal{E}(u,u)\, \mathrm{d}\tau + \int_0^t \int_\Omega \zeta\, u^2\,\dx\, \mathrm{d}\tau
\le\frac{1}{2}\, \|u_0\|^2_{ L^2(\Omega)}
\end{align}
for all $t\in[0,T)$. Besides that, $\partial_t u$ belongs to the space $L^2\big(0,T;(\mathbb{X}_{\nu}(\Omega \vert \R^N)\cap L^2(\Omega,\zeta))^*\big)$,
where $(\mathbb{X}_{\nu}(\Omega \vert \R^N)\cap L^2(\Omega,\zeta))^*$ is the dual space of $\mathbb{X}_{\nu}(\Omega \vert \R^N)\cap L^2(\Omega,\zeta)$.
According to Proposition \ref{prop:emb} (see \eqref{contWTA}) we find that $u\in C(0,T;L^2(\Omega))$.
Thus, the function $u_T=u(\cdot, T)$ is well defined and  belongs to $L^2(\Omega)$. Moreover,  the estimate \eqref{eq:weight-para-esti} holds in particular for  $t=T$. 

\vspace{2mm}

\noindent For each $\zeta\in L^2(\Omega)$, $\zeta\geq 0$,  define  the mapping $\mathscr{U}: \zeta \mapsto \mathscr{U}(\zeta)$ where  
$\mathscr{U}(\zeta)\in L^2\big(0,T;\mathbb{X}_{\nu}(\Omega \vert \R^N) \cap L^2(\Omega,\zeta)\big)$
is the unique weak solution of problem \eqref{eq:weight-parabolic}.\medskip

\noindent We want to  study the continuity of the operators $\mathscr{U}$ and $\mathscr{U}_T$ on $L^2(\Omega)$, with $\mathscr{U}_T(\zeta) (\cdot)= \mathscr{U}(\zeta)(\cdot, T)$.  
\begin{lemma}\label{lem:weak-conv-bis}
	Let $u_0\in L^2(\Omega)$ and $\{\zeta_k\}$ be a sequence of non-negative functions converging weakly in $L^2(\Omega)$ to a function $\zeta$. Then $\mathscr{U}_T(\zeta_k)\rightharpoonup \mathscr{U}_T(\zeta)$ weakly in $L^2(\Omega)$ as $k\to\infty$.
\end{lemma}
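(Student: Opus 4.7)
The plan is to set $u_k\coloneqq\mathscr U(\zeta_k)$ and argue by compactness: extract a subsequence with a limit $\widetilde u$, identify $\widetilde u=\mathscr U(\zeta)$ by uniqueness of problem \eqref{eq:weight-parabolic}, and transfer this to the traces at $t=T$. Since $\zeta_k\rightharpoonup\zeta$ in $L^2(\Om)$ implies $\sup_k\|\zeta_k\|_{L^2(\Om)}<\infty$, the energy estimate \eqref{eq:weight-para-esti} evaluated at $t=T$ produces uniform bounds on $\{u_k\}$ in $L^\infty(0,T;L^2(\Om))\cap L^2(0,T;\mathbb{X}_\nu(\Om\vert\R^N))$ and on $\{u_k(\cdot,T)\}$ in $L^2(\Om)$.

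Next I would control the time derivative against a test space adapted to the potential. Set $Y\coloneqq \mathbb{X}_\nu(\Om\vert\R^N)\cap L^\infty(\Om)$; the weak formulation gives, for every $\psi\in Y$,
\[
|\langle \partial_t u_k,\psi\rangle| \leq \|u_k\|_{\mathbb{X}_\nu}\|\psi\|_{\mathbb{X}_\nu}+\|\zeta_k\|_{L^2(\Om)}\|u_k\|_{L^2(\Om)}\|\psi\|_{L^\infty(\Om)},
\]
so $\{\partial_t u_k\}$ is uniformly bounded in $L^2(0,T;Y^*)$. Combined with the chain $\mathbb{X}_\nu(\Om\vert\R^N)\hookrightarrow\hookrightarrow L^2(\Om)\hookrightarrow Y^*$ (the compactness comes from Theorem \ref{thm:compactness}, and the second embedding is continuous and dense since $C_c^\infty(\Om)\subset Y$ is dense in $L^2(\Om)$), the Lions-Aubin lemma furnishes a subsequence (not relabeled) and $\widetilde u$ such that $u_k\to\widetilde u$ strongly in $L^2(0,T;L^2(\Om))$, weakly in $L^2(0,T;\mathbb{X}_\nu)$, and $u_k(\cdot,T)\rightharpoonup\widetilde u_T$ weakly in $L^2(\Om)$ for some $\widetilde u_T\in L^2(\Om)$.

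Then I would pass to the limit in the time-integrated weak formulation. Taking $\psi\in C^1([0,T];C_c^\infty(\Om))$ (\emph{not} required to vanish at $T$) and integrating the time derivative by parts turns the weak formulation for $u_k$ into
\[
\int_\Om u_k(T)\psi(T)\,\d x - \int_\Om u_0\,\psi(0)\,\d x - \int_0^T\!\!\int_\Om u_k\,\partial_t\psi\,\d x\,\d t + \int_0^T\!\mathcal E(u_k,\psi)\,\d t + \int_0^T\!\!\int_\Om \zeta_k u_k\psi\,\d x\,\d t = 0.
\]
All four linear terms pass to the limit by the above convergences. Since $\zeta_k$ is $t$-independent, the remaining bilinear term rewrites as $\int_\Om \zeta_k(x)\bigl(\int_0^T u_k\psi\,\d t\bigr)\,\d x$; by Cauchy-Schwarz and the strong $L^2$ convergence of $u_k$, $\int_0^T u_k\psi\,\d t\to\int_0^T \widetilde u\,\psi\,\d t$ strongly in $L^2(\Om)$, and weak-strong pairing against $\zeta_k\rightharpoonup\zeta$ identifies the limit as $\int_0^T\!\int_\Om\zeta\,\widetilde u\,\psi\,\d x\,\d t$. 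A truncation argument applied to $\zeta_k u_k^2$ (testing against $T_M(u_k)^2$ with $T_M(s)=\max(-M,\min(M,s))$ and letting $M\to\infty$ by monotone convergence) places $\widetilde u$ in $L^2(0,T;\mathbb{X}_\nu\cap L^2(\Om,\zeta))$, so $\widetilde u$ is the weak solution of \eqref{eq:weight-parabolic} with coefficient $\zeta$ and terminal trace $\widetilde u_T$. By uniqueness $\widetilde u=\mathscr U(\zeta)$ and $\widetilde u_T=\mathscr U_T(\zeta)$; since every subsequence of $\{\mathscr U_T(\zeta_k)\}$ admits a further subsequence weakly converging to the same limit, the entire sequence converges weakly to $\mathscr U_T(\zeta)$ in $L^2(\Om)$.

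I expect the principal difficulty to be the compactness step: because $\zeta_k u_k$ is only a priori in $L^1(\Om)$ and need not lie in $(\mathbb{X}_\nu(\Om\vert\R^N))^*$, one cannot control $\partial_t u_k$ in the natural dual $L^2(0,T;(\mathbb{X}_\nu)^*)$ underlying Proposition \ref{prop:emb}, and is forced onto the smaller Gelfand triple with dual $Y^*$. Verifying that Lions-Aubin still applies in this nonstandard triple, and executing the weak-strong passage together with the truncation recovery of $\widetilde u\in L^2(0,T;L^2(\Om,\zeta))$, are the delicate points of the argument.
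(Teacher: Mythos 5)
Your argument is correct, but it reaches the key compactness through a genuinely different mechanism than the paper. The paper never estimates $\partial_t u_k$ at all: for a fixed test function $\psi$ it forms the purely spatial quantity $w_k(x)=\int_0^T u_k(x,t)\psi(x,t)\,\dt$, bounds $\mathcal{E}(w_k,w_k)\leq C_\psi\|u_0\|^2_{L^2(\Omega)}$ by a direct computation using the Lipschitz bound $|\psi(x,t)-\psi(y,t)|\leq \|\psi(\cdot,t)\|_{W^{1,\infty}}(1\wedge|x-y|)$ together with the energy estimate \eqref{eq:weight-para-esti}, and then invokes only the spatial compact embedding $\mathbb{X}_{\nu}(\Omega\vert\RR^N)\hookrightarrow L^2(\Omega)$ of Theorem \ref{thm:compactness} to get $w_k\to w$ strongly in $L^2(\Omega)$ --- exactly what is needed for the weak--strong pairing with $\zeta_k$. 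You instead control $\partial_t u_k$ in $L^2(0,T;(\mathbb{X}_\nu\cap L^\infty)^*)$ and run Aubin--Lions--Simon in that nonstandard Gelfand triple to obtain strong convergence of $u_k$ itself in $L^2(0,T;L^2(\Omega))$; you correctly identify why the natural dual $(\mathbb{X}_\nu)^*$ is unavailable (the product $\zeta_k u_k$ is only in $L^1$), and your workaround is sound since $C_c^\infty(\Omega)\subset \mathbb{X}_\nu\cap L^\infty$ is dense in $L^2(\Omega)$ and Simon's version of the lemma needs no reflexivity of the outer space. Your route costs more machinery and the verification of the triple, but buys a strictly stronger intermediate conclusion (strong $L^2(\Omega_T)$ convergence of $u_k$, not merely of the weighted time-averages), and your truncation step with $T_M$ establishing $\widetilde u\in L^2(0,T;L^2(\Omega,\zeta))$ is a point the paper's proof passes over silently even though its notion of solution requires it. Both proofs then conclude identically: identify the limit by uniqueness and upgrade subsequential to full-sequence weak convergence.
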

\begin{proof}
	For brevity, we denote  $u_k=\mathscr{U}(\zeta_k)$ and $u=\mathscr{U}(\zeta)$. It follows from \eqref{eq:weight-para-esti}, for all $k\in \mathbb{N}$,
	\begin{align}\label{eq:bounded-uk}
	\frac{1}{2}\, \|u_k(\cdot, T)\|^2_{L^2(\Omega)} +\int_0^T \mathcal{E}(u_k,u_k)\, \mathrm{d}t + \int_0^T \int_\Omega \zeta_k\, u_k^2\,\dx\, \mathrm{d}t\leq \frac{1}{2}\, \|u_0\|^2_{L^2(\Omega)}.
	\end{align}
\noindent Next, consider $\psi:\R^N\times (0,T)\to \mathbb{R}$ be an arbitrary smooth function such that $\psi = 0$ in $\R^N\setminus \Omega \times (0,T)$. Define $w_k(x)=\int_0^T u_k(x,t) \psi(x,t)\dt$ assuming $\psi(\cdot, t)\in C_c^\infty(\Omega)$ there holds  $$|\psi(x,t)-\psi(y,t)|\leq \|\psi(\cdot, t)\|_{W^{1,\infty}(\R^N)}(1\land |x-y|).$$
Using this inequality yields 
\begin{align*}
|w_k(x)-w_k(y)|^2&=\Big| \int_0^T( u_k(x,t) -u_k(y,t)) \psi(x,t)\ \d t + \int_0^T (\psi(x,t)-\psi(y,t)) u_k(y,t) \d t \Big|^2\\
&\leq 2T \int_0^T| u_k(x,t) -u_k(y,t)|^2|\psi(x,t)|^2\d t+ 2T \int_0^T| \psi(x,t)-\psi(x,t)|^2|u_k(y,t)|^2\d t\\
&\leq 2T\sup_{t\in [0,T]}\|\psi(\cdot, t)\|^2_{W^{1,\infty}(\R^N)}\int_0^T| u_k(x,t) -u_k(y,t)|^2 + (1\land|x-y|^2)|u_k(y,t)|^2\d t. 
\end{align*}
Integrating both side over $\Omega\times\R^N$ (see Remark \ref{rmk:equivalence}) with respect to the measure $\nu(x-y)\d x\d y$ gives 
	\begin{align*}
	\mathcal{E} (w_k, w_k) 	&\leq 2T\sup_{t\in [0,T]}\|\psi(\cdot, t)\|^2_{W^{1,\infty}(\R^N)} \int_0^T\mathcal{E} (u_k, u_k)\d t +\int_{\R^N} (1\land|h|^2)\nu(h)\d h\int_0^T\int_\Omega |u_k(,t)|^2\d x\d t.
	\end{align*}
Altogether, with the Poincar\'e inequality \eqref{eq:poincareX} and the inequality \eqref{eq:bounded-uk} yield 
	\begin{align*}
	\frac{1}{2}\, \|u_k(\cdot, T)\|^2_{L^2(\Omega)} + \int_0^T \mathcal{E}(u_k,u_k)\, \mathrm{d}t &\leq \frac{1}{2}\, \|u_0\|^2_{L^2(\Omega)}\quad\text{and}\quad
\mathcal{E} (w_k, w_k) 	\leq C_\psi\|u_0\|^2_{L^2(\Omega)}
\end{align*}
\noindent Therefore, $\{w_k\}$ is bounded in $\mathbb{X}_\nu(\Omega|\R^N)$. By the compactness Theorem \ref{thm:compactness} we can  assume that $\{w_k\}$ strongly converge to some $w\in L^2(\Omega)$. On the other hand, $\{u_k\}$ and $\{u_k(\cdot,T)\}$ are   bounded  in  $L^2\big(0,T;\mathbb{X}_{\nu}(\Omega \vert \R^N))$ and $L^2(\Omega)$ respectively and hence can be assumed to  weakly converge to some $u\in L^2\big(0,T;\mathbb{X}_{\nu}(\Omega \vert \R^N))$ and $h\in L^2(\Omega)$ respectively. In particular, the strong convergence implies that,  for every  $\phi\in C_c^\infty(\Omega)$, 
\begin{align*}
	\int_\Omega w(x)\phi(x)\d x=\lim_{k\to \infty}\int_\Omega w_k(x)\phi(x)\d x= \lim_{k\to \infty}\int_\Omega\phi(x) \int_0^Tu_k(x,t)\d t\d x= \int_\Omega\phi(x) \int_0^Tu(x,t)\d t\d x. 
\end{align*}
It turns out that $w= \int_0^Tu(\cdot,t)\d t$ a.e. on $\Omega$. By the same token,  one gets $h= u(\cdot,T)$ a.e. on $\Omega$.
\noindent Once again, the strong convergence of $\{w_k\}$ and the weak of convergence of $\{\zeta_k\}$  in $ L^2(\Omega)$ yield 
	\begin{align}\label{eq:weak-con-pertubed}
	\int_0^T\int_\Omega \zeta_{k} u_{k} \psi\,\dx\,\dt \xrightarrow{k\to\infty} \int_0^T\int_\Omega \zeta u \psi\,\dx\, \dt.
	\end{align}
	
	\noindent For each $k\geq 1$, by definition of $u_k= \mathscr{U}(\zeta_k)$,  we get 
	\begin{align*}
	\int_0^T\int_\Omega  u_k \partial_t \psi\,\dx\,\dt + \int_0^T\mathcal{E}(u_k,\psi)\, \dt
	+\int_0^T\int_\Omega \zeta_k u_k\,\psi\,\dx\, \dt\,\dt =
	-\int_\Omega  (u_k(\cdot, T)\psi(\cdot, T)-u_0 \psi(\cdot, 0)) \,\dx.
	\end{align*}
	
	\noindent By choosing in  particular the test function $\psi(\cdot, T)=0$,  letting $k\to \infty$ yields $w(\cdot, 0) = u_0$ and 
	\begin{align*}
	&\int_0^T\int_\Omega \partial_t w \psi\,\dx+  \int_0^T\mathcal{E}(w,\psi)\, \dt+ \int_0^T\int_\Omega \zeta w \psi \d x\d t =0\quad \text{for all $\psi\in C^1_c(0,T; C_c^\infty(\Omega))$}. 
	\end{align*}
	This means that $u$ is a weak solution to  \eqref{eq:weight-parabolic} and by uniqueness, $u= \mathscr{U}(\zeta)$. The uniqueness of $u$ implies  that the whole sequence  $\{u_k\}$ weakly converges to $u$ in $ L^2\big(0,T;\mathbb{X}_{\nu}(\Omega \vert \R^N))$. Therefore, since $u_k=\mathscr{U}(\zeta_k)$ and $u=\mathscr{U}(\zeta)$ are weak solutions, using \eqref{eq:weak-con-pertubed} and  the weak convergence, for $\psi\in  L^2(\Omega) $ gives
	\begin{align*}
		\int_\Omega (\mathscr{U}_T(\zeta_k)-\mathscr{U}_T(\zeta))(x)\psi(x)\d x= \int_0^T\mathcal{E}(u_k-u,\psi)\, \dt+  \int_0^T\int_\Omega (u_k\zeta_k-u\zeta) (x,t)\psi(x)\d x\d t\xrightarrow{k\to\infty} 0.
	\end{align*}
	  That is,  the whole sequence $\{\mathscr{U}_T(\zeta_k)\}$ weakly converges to $U_T$ in $L^2(\Omega)$ which is the sought result.
\end{proof}

\section{Weak solvability and uniqueness of the solution}\label{Sec:WSU}
Armed with the above auxiliaries results, let us turn our attention to the proof of the weak solvability of problem \eqref{eq:main-equation}.  In order to apply the Tychonoff fixed-point Theorem \ref{thm:tychonoff}, we take $X=L^2(\Omega)$, $G=\{w\in L^2(\Omega)\,: \; \|w\|_{L^2(\Omega)}\le \|u_0\|_{L^2(\Omega)}\}$ which is clearly closed, convex and bounded.  The next result provides the existence of a weak solution to the problem \eqref{eq:main-equation}.  

\begin{theorem}\label{thm:existence}
	Let $u_0\in L^2(\Omega)$ and $T>0$.  Let the mapping $\pi:G\to G$ with $\pi (w)= \mathscr{U}_T(\varphi(v))$, where $v=V(u_0-w)$ (defined as in \eqref{eq:def-v}) is the unique weak solution to \eqref{eq:elliptic-equation} with $f= u_0-w$. 
	Then $\pi$ has a fixed point $u_T$ that is 
	\begin{align*}
	&u_T =\pi(u_T)= \mathscr{U}_{T} \big(\varphi(v)\big)\quad \text{with $v=V(u_0-u_T)$.}
	\intertext{Moreover, $\dis v=\int_0^T u \d t$ and }
	&u= \mathscr{U} \big(\varphi(v)\big)\quad\text{is a weak solution of the problem \eqref{eq:main-equation}. }
	\end{align*} 
\end{theorem}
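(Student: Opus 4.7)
The plan is to deploy the Tychonoff fixed-point Theorem~\ref{thm:tychonoff} with $X=L^2(\Omega)$ and $G$ as stated. Three things must be checked: that $\pi$ maps $G$ into itself, that $\pi$ is weakly sequentially continuous, and, once a fixed point $u_T$ is obtained, that the associated $u$ indeed satisfies $v=\int_0^T u(\cdot,t)\,\dt$ and therefore solves \eqref{eq:main-equation} weakly.

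For the self-mapping property I would fix $w\in G$, set $v=V(u_0-w)$, and note that $\zeta\coloneqq\varphi(v)$ is non-negative and belongs to $L^2(\Omega)$ by Theorem~\ref{thm:existence-elliptic}(iii); the energy estimate \eqref{eq:weight-para-esti} at $t=T$, combined with the non-negativity of $\mathcal{E}(u,u)$ and of the weighted $L^2$ term, immediately yields $\|\pi(w)\|_{L^2(\Omega)}\leq \|u_0\|_{L^2(\Omega)}$. For weak sequential continuity, given $w_k\rightharpoonup w$ in $L^2(\Omega)$, Lemma~\ref{lem:weak-convergence} (applied to $f_k\coloneqq u_0-w_k$) delivers $v_k\coloneqq V(f_k)\to v$ strongly in $\mathbb{X}_\nu(\Omega\vert\R^N)$ together with $\varphi(v_k)\rightharpoonup \varphi(v)$ weakly in $L^2(\Omega)$; Lemma~\ref{lem:weak-conv-bis} then gives $\mathscr{U}_T(\varphi(v_k))\rightharpoonup \mathscr{U}_T(\varphi(v))$ weakly in $L^2(\Omega)$, i.e.\ $\pi(w_k)\rightharpoonup \pi(w)$. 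Theorem~\ref{thm:tychonoff} thus produces a fixed point $u_T\in G$ with $u_T=\mathscr{U}_T(\varphi(v))$ and $v=V(u_0-u_T)$.

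It remains to identify $v$ with $\tilde v\coloneqq\int_0^T u(\cdot,t)\,\dt$, where $u\coloneqq\mathscr{U}(\varphi(v))$. Since $u\in L^2(0,T;\mathbb{X}_\nu(\Omega\vert\R^N))$, one has $\tilde v\in \mathbb{X}_\nu(\Omega\vert\R^N)$. I would test the parabolic weak formulation of $u$ against a time-independent $\phi\in C_c^\infty(\Omega)$, use the Lions--Magenes continuity \eqref{contWTA} to evaluate the boundary-in-time terms at $0$ and $T$, and invoke Fubini together with the linearity of $\mathcal{E}(\cdot,\phi)$ to obtain
\[
\mathcal{E}(\tilde v,\phi)+\int_\Omega \varphi(v)\,\tilde v\,\phi\,\dx=\int_\Omega (u_0-u_T)\,\phi\,\dx.
\]
By construction $v$ satisfies the same identity (with $f=u_0-u_T$), so the difference $z\coloneqq\tilde v-v\in \mathbb{X}_\nu(\Omega\vert\R^N)$ satisfies the \emph{linear} equation $\mathcal{E}(z,\phi)+(\varphi(v)z,\phi)=0$ for every admissible $\phi$. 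Testing with $\phi=z$ (after verifying that both $v$ and $\tilde v$ belong to $L^2(\Omega,\varphi(v))$ by plugging them back into their respective equations) and using $\varphi(v)\geq 0$ forces $\mathcal{E}(z,z)=0$, whence $z=0$ by the Poincar\'e--Friedrichs inequality \eqref{eq:poincareX}. Inserting $v=\tilde v$ into the definition $u=\mathscr{U}(\varphi(v))$ then shows that $u$ satisfies the weak formulation \eqref{eq:def-weak-solution} of \eqref{eq:main-equation}.

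The first two steps are routine applications of the auxiliary results of Section~\ref{Sec:auxiliaries}. The main obstacle I anticipate is the identification $\tilde v=v$: one must carefully justify the Fubini-type interchange between $\int_0^T$ and the nonlocal bilinear form $\mathcal{E}$ (which rests on absolute integrability via Cauchy--Schwarz in $\mathbb{X}_\nu(\Omega\vert\R^N)$), verify that $\tilde v\in L^2(\Omega,\varphi(v))$ so that $z$ is a legitimate test function, and recognise that the relevant uniqueness is for a \emph{linear} elliptic equation with frozen coefficient $\varphi(v)$, rather than the genuinely nonlinear uniqueness of Lemma~\ref{lem:stability}.
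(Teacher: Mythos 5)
Your proposal is correct and follows the same overall route as the paper: Tychonoff's theorem on the same set $G\subset L^2(\Omega)$, with the self-mapping property coming from Theorem~\ref{thm:existence-elliptic} and the energy estimate \eqref{eq:weight-para-esti}, and weak sequential continuity coming from Lemma~\ref{lem:weak-convergence} followed by Lemma~\ref{lem:weak-conv-bis}. The one place where you genuinely diverge is the identification $v=\int_0^T u\,\dt$. The paper integrates the parabolic weak formulation in time against a time-independent test function, arrives at the identity $\mathcal{E}(\tilde v,\psi)+\int_\Omega\varphi(v)\,\tilde v\,\psi\,\dx=\int_\Omega(u_0-u_T)\psi\,\dx$ with $\tilde v=\int_0^T u\,\dt$, and then directly invokes the uniqueness part of Theorem~\ref{thm:existence-elliptic} to conclude $\tilde v=V(u_0-u_T)=v$. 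As you observe, this is slightly loose: $\tilde v$ a priori satisfies only the \emph{linear} equation with the frozen coefficient $\varphi(v)$, not the nonlinear equation $\mathscr{L}\tilde v+\varphi(\tilde v)\tilde v=u_0-u_T$ to which Theorem~\ref{thm:existence-elliptic} and Lemma~\ref{lem:stability} apply. Your subtraction argument --- showing $z=\tilde v-v$ solves $\mathcal{E}(z,\phi)+(\varphi(v)z,\phi)=0$ and testing with $z$ (after checking $\tilde v\in L^2(\Omega,\varphi(v))$, which follows from $\varphi(v)\tilde v^2\le T\int_0^T\varphi(v)u^2\,\dt\in L^1(\Omega)$ via the energy estimate) --- closes this gap cleanly and is, if anything, more careful than the published argument. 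Everything else matches the paper's proof.
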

\begin{proof}
For  $v=V(u_0-w) $ with $w\in G$, we know from Theorem \ref{thm:existence-elliptic} that $\varphi(v) \in L^2(\Omega)$. Thus for $\zeta = \varphi (v)\geq0  $,  the function $\mathscr{U}(\zeta)$ satisfies \eqref{eq:weight-para-esti} which implies that $\|\mathscr{U}_T(\zeta)\|_{L^2(\Omega)}\le \|u_0\|_{L^2(\Omega)}$. In particular,  $\|\pi(w)\|_{L^2(\Omega)}\le \|u_0\|_{L^2(\Omega)}$  for all $w\in G$ and thus, $\pi(G)\subset G$. It remains to prove the weak sequential continuity of $\pi$. Let $\{w_k\}$ be an arbitrary sequence in $G$ that converges to $w\in G$ weakly in $L^2(\Omega)$. We need to prove that $\pi(w_k)\rightharpoonup \pi(w)$ weakly in $L^2(\Omega)$ as $k\to\infty$.  In virtue of  Lemma~\ref{lem:weak-convergence}, $v_k=V(u_0-w_k)\to v=V(u_0-w)$  strongly in $\mathbb{X}_\nu(\Omega|\R^N)$ and $\varphi(v_k)\rightharpoonup\varphi(v)$ weakly in $L^2(\Omega)$ as
	$k\to\infty$, where $v_k=V(u_0-w_k)$ and $v=V(u_0-w)$.  In turn, Lemma~\ref{lem:weak-conv-bis} implies that $\pi (w_k)= \mathscr{U}_T(\varphi(v_k))\rightharpoonup\mathscr{U}_T(\varphi(v))=\pi (w)$ weakly in $L^2(\Omega)$ as $k\to\infty$. Thus, according to Theorem \ref{thm:tychonoff}, $\pi$ has a fixed point $u_T= \pi(u_T)$.   Next, knowing that $u_T$ is a fixed point of the mapping $\pi$, we show that $u=\mathscr{U}(\varphi(v))$, with $v=V(u_0-u_T) $, is a weak solution to the problem \eqref{eq:main-equation}. Indeed, recall that $u=\mathscr{U}(\varphi(v))$ is the unique weak solution to the problem \eqref{eq:weight-parabolic} with $\zeta= \varphi(v)$, \ie  $u(\cdot, 0) =u_0$ and for all $\psi\in C^1_c(0,T; \mathbb{X}_\nu (\Omega|\R^N) )$, 
	\begin{align}\label{eq:weak-main-bis}
	\int_0^T \int_\Omega  \partial_t u\,\psi\,\dx\, \dt + \int_0^T \mathcal{E}(u,\psi)\,\dt +\int_0^T \int_\Omega \varphi(v)\,u\,\psi\dx\, \dt = 0.
	\end{align}
Integrating by parts,  for $\psi \in \mathbb{X}_\nu (\Omega|\R^N)$ (time independent) we get 
\begin{align*}
	\mathcal{E}\Big(\int_0^Tu\d t,\psi\Big)+ \int_\Omega \varphi(v)\psi \,\int_0^Tu\, \d t\,\dx = \int_\Omega [u_0-u_T] \psi\,\dx. 
\end{align*}
	Thus, according to Theorem \ref{thm:existence-elliptic}, $v= \int_0^Tu(x,t)\dt$ is the unique  weak solution to the elliptic problem 
	\begin{align}\label{eq:elliptic-bis}
	\mathscr{L} v + \varphi(v)\,v= u_0-u_T  \,\, \mbox{ in }\; \, \Omega\quad 
	\text{and}\quad 	v =0\,\, \mbox{ on }\;\,  \Omc. 
	\end{align}
	We have shown that 
	$ v= V(u_0-u_T)=\int_0^Tu\dt. $
	Therefore,  we obtain
	\begin{align}\label{eq:existence-weak}
	u=\mathscr{U} \Big(\varphi\Big(\int_0^T u\, \dt\Big)\Big)
	\end{align}
	which, according to the relation \eqref{eq:weak-main-bis}, implies that $u$ is a weak solution to the problem \eqref{eq:main-equation}. 
\end{proof}

\noindent The main result of the paper is the following theorem.

\begin{theorem}\label{thm:main-result}
	Let  $u_0\in L^2(\Omega)$, $T>0$  and $\varphi$ satisfies Assumption~\ref{eq:assumption-phi}. The problem \eqref{eq:main-equation} has a weak solution $u\in
	L^\infty(0,T;L^2(\Omega))\cap L^2(0,T;\mathbb{X}_{\nu}(\Omega \vert \R^N))$ such that
\begin{align*}
	\varphi(v)\in L^2(\Omega),\quad \varphi(v)\, v\in L^2(\Omega), \quad \varphi(v)\, u^2 \in L^1(\Omega_T),~\text{and}
	\quad u\in C(0,T;L^2(\Omega)),
	\end{align*}
	where $\displaystyle v=\int_0^T u\,\dt$. Moreover, the following estimates hold true  
	\begin{align*}
	&\frac{1}{2}\, \|u\|^2_{L^\infty(0,T; L^2(\Omega))} +\|u\|^2_{L^2(0,T;\mathbb{X}_\nu(\Omega|\R^N))} + \int_0^T \int_\Omega \varphi(v)\, u^2\,\dx\, \d t
	\le\frac{1}{2}\, \|u_0\|^2_{ L^2(\Omega)},\\
	&\|\partial_t u\|^2_{L^2(0,T; (\mathbb{X}_\nu(\Omega|\R^N) \cap L^2(\Omega, \varphi(v))^*)} 
	\le\frac{1}{2}\, \|u_0\|^2_{ L^2(\Omega)}. 
	\end{align*} 
	
\end{theorem}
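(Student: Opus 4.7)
The existence of a weak solution to \eqref{eq:main-equation} has essentially already been obtained in Theorem~\ref{thm:existence}. The plan is therefore to upgrade that conclusion by collecting the regularity statements and by deriving the two energy estimates from the material of Sections~\ref{Sec:preli} and~\ref{Sec:auxiliaries}, without repeating the fixed-point argument.

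First, I take $u=\mathscr{U}(\varphi(v))$ with $v=V(u_0-u_T)$ produced by Theorem~\ref{thm:existence}. The identity $v=\int_0^T u(\cdot,t)\,\dt$ is part of that theorem, and Theorem~\ref{thm:existence-elliptic} applied with $f=u_0-u_T\in L^2(\Omega)$ immediately yields $\varphi(v)\in L^2(\Omega)$ and $\upchi(v)=\varphi(v)v\in L^2(\Omega)$. Next, because $\zeta\coloneqq \varphi(v)\ge 0$ lies in $L^2(\Omega)$, the function $u=\mathscr{U}(\zeta)$ is the unique weak solution to the linear parabolic problem \eqref{eq:weight-parabolic} with this weight, hence it satisfies the energy inequality \eqref{eq:weight-para-esti}. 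Taking the essential supremum over $t\in[0,T]$ in \eqref{eq:weight-para-esti} and then letting $t=T$ in the integral terms gives the first announced estimate
\[
\tfrac12\|u\|^2_{L^\infty(0,T;L^2(\Omega))}+\|u\|^2_{L^2(0,T;\mathbb{X}_\nu(\Omega|\R^N))}+\int_0^T\!\!\int_\Omega \varphi(v)\,u^2\,\dx\,\dt\le \tfrac12\|u_0\|^2_{L^2(\Omega)}.
\]
In particular $\varphi(v)\,u^2\in L^1(\Omega_T)$, which is one of the required integrability properties.

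For the second estimate I test the weak formulation pointwise in time: for a.e.\ $t\in(0,T)$ and every $\psi\in \mathbb{X}_\nu(\Omega|\R^N)\cap L^2(\Omega,\varphi(v))$,
\[
\langle \partial_t u(\cdot,t),\psi\rangle = -\mathcal{E}(u(\cdot,t),\psi)-\int_\Omega \varphi(v)\,u(\cdot,t)\,\psi\,\dx.
\]
Applying Cauchy--Schwarz to each term and then to the resulting sum with the combined norm $\|\psi\|^2=\|\psi\|^2_{\mathbb{X}_\nu(\Omega|\R^N)}+\|\psi\|^2_{L^2(\Omega,\varphi(v))}$ yields
\[
\|\partial_t u(\cdot,t)\|^2_{(\mathbb{X}_\nu(\Omega|\R^N)\cap L^2(\Omega,\varphi(v)))^*}\le \|u(\cdot,t)\|^2_{\mathbb{X}_\nu(\Omega|\R^N)}+\int_\Omega \varphi(v)\,u^2(\cdot,t)\,\dx.
\]
Integrating in time and inserting the first energy estimate just established delivers the second inequality with the $\tfrac12\|u_0\|^2_{L^2(\Omega)}$ bound.

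Finally, for the continuity $u\in C(0,T;L^2(\Omega))$ I combine $u\in L^2(0,T;\mathbb{X}_\nu(\Omega|\R^N))$ with $\partial_t u\in L^2(0,T;(\mathbb{X}_\nu(\Omega|\R^N))^\star)$ (the latter follows \emph{a fortiori} from the previous step since $\mathbb{X}_\nu(\Omega|\R^N)$ continuously embeds into $\mathbb{X}_\nu(\Omega|\R^N)\cap L^2(\Omega,\varphi(v))$ whenever $\varphi(v)\in L^2$ and the solution is bounded). Thus $u\in H_\nu(0,T)$ and Proposition~\ref{prop:emb}$(i)$ (Lions--Magenes) yields $u\in C([0,T];L^2(\Omega))$. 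The main delicate point I anticipate is precisely the justification of this last step: one must ensure that the continuous embedding into the dual of the weighted intersection space can be promoted to an $L^2(0,T;(\mathbb{X}_\nu(\Omega|\R^N))^\star)$ bound so that the Lions--Magenes lemma as stated in Proposition~\ref{prop:emb} is directly applicable; since $\varphi(v)\,u\in L^2(\Omega_T)$ and $\mathscr{L}u\in L^2(0,T;(\mathbb{X}_\nu(\Omega|\R^N))^\star)$, this is handled by writing $\partial_t u=-\mathscr{L}u-\varphi(v)u$ and estimating the two pieces separately using the Poincar\'e--Friedrichs inequality \eqref{eq:poincareX}.
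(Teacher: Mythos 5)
Your overall route coincides with the paper's: existence is quoted from Theorem~\ref{thm:existence}, the memberships $\varphi(v),\varphi(v)v\in L^2(\Omega)$ come from Theorem~\ref{thm:existence-elliptic} applied to the elliptic problem with right-hand side $u_0-u_T$, the first estimate is the energy inequality \eqref{eq:weight-para-esti} for $u=\mathscr{U}(\varphi(v))$ (which also yields $\varphi(v)u^2\in L^1(\Omega_T)$), and the bound on $\partial_t u$ is the same Cauchy--Schwarz duality computation — the paper carries it out with time-dependent test functions in one go, while your pointwise-in-time version followed by integration is equivalent. All of that is fine.

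The genuine gap is in your justification of $u\in C(0,T;L^2(\Omega))$. You attempt to upgrade $\partial_t u\in L^2\big(0,T;(\mathbb{X}_\nu(\Omega|\R^N)\cap L^2(\Omega,\varphi(v)))^*\big)$ to $\partial_t u\in L^2\big(0,T;(\mathbb{X}_\nu(\Omega|\R^N))^*\big)$, but the embeddings go the other way: the intersection space embeds into $\mathbb{X}_\nu(\Omega|\R^N)$, hence $(\mathbb{X}_\nu(\Omega|\R^N))^*$ embeds into the dual of the intersection, and the estimate you proved is the \emph{weaker} one. Your proposed repair --- writing $\partial_t u=-\mathscr{L}u-\varphi(v)u$ and claiming $\varphi(v)u\in L^2(\Omega_T)$ --- does not follow from the hypotheses: one only knows $\varphi(v)\in L^2(\Omega)$ and $\sqrt{\varphi(v)}\,u\in L^2(\Omega_T)$, which controls $\int_\Omega\varphi(v)u\psi\,\dx$ only for $\psi\in L^2(\Omega,\varphi(v))$ and gives no $L^2(\Omega_T)$ bound on $\varphi(v)u$ (there is no Sobolev-type embedding for a general kernel $\nu$), while boundedness of $u$ via Lemma~\ref{lem:max-principle} requires $u_0\in L^\infty(\Omega)$, which is not assumed in this theorem. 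The correct resolution --- and the one the paper uses --- is not to pass back to $(\mathbb{X}_\nu(\Omega|\R^N))^*$ at all: the Lions--Magenes continuity statement holds for any Gelfand triple $V\hookrightarrow L^2(\Omega)\hookrightarrow V^*$, and one applies it with $V=\mathbb{X}_\nu(\Omega|\R^N)\cap L^2(\Omega,\varphi(v))$, for which you have already established both $u\in L^2(0,T;V)$ and $\partial_t u\in L^2(0,T;V^*)$. This is precisely how the continuity of $\mathscr{U}(\zeta)$ is recorded in Section~\ref{sec:parabolic}, and the paper's proof simply quotes that fact.
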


\begin{proof}
	The existence of a weak solution to the problem \eqref{eq:main-equation} follows immediately from Theorem \ref{thm:existence}. Furthermore, mimicking the  estimate \eqref{eq:weight-para-esti} yields 
	\begin{align*}
	\frac{1}{2}\, \|u\|^2_{L^\infty(0,T; L^2(\Omega))} +\|u\|^2_{L^2(0,T;\mathbb{X}_\nu(\Omega|\R^N))} + \int_0^T \int_\Omega \varphi(v)\, u^2\,\dx\, \d t
	\le\frac{1}{2}\, \|u_0\|^2_{ L^2(\Omega)}. 
	\end{align*}
	\noindent Now, for each $\psi\in L^2(0,T; \mathbb{X}_\nu(\Omega|\R^N) \cap L^2(\Omega, \varphi(v))$, by definition of $u$, we have 
	\begin{align*}
	&\left| \int_0^T\langle\partial_t u , \psi\rangle \d t\right|^{2}=\left| -\int_0^T \mathcal{E}(u,\psi)\, \d t - \int_0^T \int_\Omega \varphi(v)\, u\psi \,\dx\, \mathrm{d}t \right|^2
	\\
	&\leq \left(\int_0^T \mathcal{E}(u,u) \d t\right) \left(\int_0^T \mathcal{E}(\psi,\psi) \d t\right) + \left( \int_0^T \int_\Omega \varphi(v)\, u^2\dx \d t\right) \left( \int_0^T \int_\Omega \varphi(v) \psi^2\,\dx \d t\right)\\
	&\leq\frac{1}{2}\|u_0\|^2_{L^2(\Omega)} \left( \int_0^T \mathcal{E}(\psi,\psi)\, \d t+ \int_0^T \int_\Omega \varphi(v) \psi^2\,\dx \d t\right).
	\end{align*}
	This implies that 
	\begin{align*}
	\|\partial_t u\|^2_{L^2(0,T; (\mathbb{X}_\nu(\Omega|\R^N) \cap L^2(\Omega, \varphi(v))^*)} 
	\le\frac{1}{2}\, \|u_0\|^2_{ L^2(\Omega)}. 
	\end{align*}
	Therefore, we have $ u\in  L^2\big(0,T;\mathbb{X}_{\nu}(\Omega \vert \R^N) \cap L^2(\Omega,\zeta)\big)$ and $\partial_t u \in L^2\big(0,T;(\mathbb{X}_{\nu}(\Omega \vert \R^N) \cap L^2(\Omega,\zeta))^*\big)$ with $\zeta=\varphi\Big(\int_0^T u\d t\Big)$ which implies that $\varphi(v)u^2\in L^1(\Omega_T)$. On the one hand, by definition of $u= \mathscr{U}(\varphi(v))$ it follows that  $u\in C(0,T;L^2(\Omega) )$. On the other hand, we know that $ v=\int_0^T u\, \dt$ is the unique weak solution to the problem \eqref{eq:elliptic-bis} and hence from Theorem \ref{thm:existence-elliptic} we have $\varphi(v), \varphi(v)v\in L^2(\Omega)$.  This ends the proof. 
\end{proof}

\noindent Next, we prove that problem \eqref{eq:main-equation} has a unique solution, provided that the initial condition $u_0$ is bounded. Before, we need to establish the following maximum principle result. 
\begin{lemma}\label{lem:max-principle}
	Let $u = u(x, t)$  be a weak solution of the problem \eqref{eq:main-equation}, \ie satisfies  \eqref{eq:def-weak-solution} with  $u_0\in L^2(\Omega)\cap L^\infty(\Omega)$ then  $\|u\|_{L^\infty(0,T;L^\infty(\Omega))}\leq \|u_0\|_{ L^\infty(\Omega)}$ on $\Omega_T,$ i.e.,   $|u|\leq \|u_0\|_{ L^\infty(\Omega)}$ a.e. on $\Omega_T.$
\end{lemma}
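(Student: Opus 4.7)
\textbf{Proof plan for Lemma \ref{lem:max-principle}.} The argument is a standard Stampacchia-type truncation adapted to the nonlocal framework. Set $M=\|u_0\|_{L^\infty(\Omega)}\geq 0$ and introduce the test function $w(\cdot,t):=(u(\cdot,t)-M)^+$. Since $u=0$ on $\Omc$ and $M\geq 0$, one has $w=0$ a.e.\ on $\Omc$, so $w(\cdot,t)\in \mathbb{X}_\nu(\Omega\vert\RR^N)$ (being a Lipschitz truncation of $u(\cdot,t)$ which is itself in that space). By the integrability properties of $u$ from Theorem \ref{thm:main-result}, $w\in L^2(0,T;\mathbb{X}_\nu(\Omega\vert\RR^N))$. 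Although $w$ is not a smooth-in-time test function, the standard density argument (regularize $u$ in time by Steklov averaging, test, then pass to the limit, or equivalently invoke the chain-rule identity valid for functions in $H_\nu(0,T)\cap L^\infty(0,T;L^2(\Omega))$) allows me to plug $w$ into \eqref{eq:def-weak-solution}.

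The calculation proceeds by treating each of the three resulting terms separately. First, the time-derivative term gives, by the chain rule,
\[
\int_\Omega \partial_t u\,(u-M)^+\,\dx \;=\; \tfrac{1}{2}\frac{d}{dt}\int_\Omega |(u-M)^+|^2\,\dx.
\]
Second, the nonlocal bilinear term is nonnegative because of the pointwise inequality
\[
(a-b)\bigl((a-M)^+-(b-M)^+\bigr)\;\geq\;\bigl((a-M)^+-(b-M)^+\bigr)^2\qquad\text{for all } a,b\in\RR,
\]
which, together with the symmetry and nonnegativity of $\nu$, yields $\upxi(u,w)\geq \upxi(w,w)\geq 0$. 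Third, the potential term $\int_\Omega \varphi(v)\,u\,w\,\dx$ is nonnegative since $\varphi\geq 0$ and, on the set $\{u>M\}$, we have $u\geq M\geq 0$ so that $u\,w\geq M\,w\geq 0$.

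Adding the three contributions and integrating from $0$ to $t\in(0,T]$, I obtain
\[
\tfrac{1}{2}\|w(\cdot,t)\|_{L^2(\Omega)}^2\;+\;\int_0^t\upxi(w,w)\,\mathrm{d}\tau\;+\;\int_0^t\!\!\int_\Omega \varphi(v)\,u\,w\,\dx\,\mathrm{d}\tau\;\leq\;\tfrac{1}{2}\|w(\cdot,0)\|_{L^2(\Omega)}^2\;=\;0,
\]
where the vanishing of the initial term follows from $u(\cdot,0)=u_0\leq M$ a.e. Hence $w\equiv 0$, i.e., $u\leq M$ a.e.\ on $\Omega_T$. The lower bound is obtained by the symmetric choice of test function $\widetilde w:=-(u+M)^-$; the same three observations (chain rule, the analogous algebraic inequality giving $\upxi(u,\widetilde w)\geq \upxi(\widetilde w,\widetilde w)\geq 0$, and the sign check $u\widetilde w=u(u+M)\geq 0$ on $\{u<-M\}$ since both factors are then nonpositive) lead to $\widetilde w\equiv 0$, hence $u\geq -M$ a.e. Combining both bounds gives $|u|\leq \|u_0\|_{L^\infty(\Omega)}$ a.e.\ on $\Omega_T$.

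The main technical obstacle is the rigorous justification of the chain-rule identity for $\partial_t u\cdot(u-M)^+$: since the test functions in the definition of weak solutions are in $C^1_c([0,T);\mathbb{X}_\nu(\Omega\vert\RR^N))$ and $(u-M)^+$ is not smooth in time, one must either approximate $u$ by its Steklov averages (where the chain rule is classical) and pass to the limit using the $L^2(0,T;\mathbb{X}_\nu(\Omega\vert\RR^N))$ strong convergence of the truncations, or cite the general chain rule for the Gelfand triple embedding \eqref{contWTA}. All other ingredients are then routine verifications of the three pointwise/integral sign conditions spelled out above.
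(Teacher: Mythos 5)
Your proposal is correct and is essentially the paper's own argument: the paper tests with $F'(u)$ where $F(\tau)=\operatorname{dist}(\tau,[-\Lambda,\Lambda])^2$, and since $F'(\tau)=2\bigl((\tau-\Lambda)^+-(\tau+\Lambda)^-\bigr)$, this is exactly (twice) the combination of your two truncations $(u-M)^+$ and $-(u+M)^-$, handling both bounds in one stroke via the monotonicity of $F'$ (for the nonlocal term) and $F'(\tau)\tau\ge 0$ (for the potential term). The chain-rule/admissibility issue you flag is likewise present, and left implicit, in the paper's proof.
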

\begin{proof}
	Set  $\Lambda=\|u_0\|_{L^\infty(\Omega)}$ and consider the convex  function $F:\R\to [0, \infty)$ defined by 
	\begin{align*}
	F(\tau) = 
	\begin{cases}
	\big(\tau +\Lambda\big)^2& \text{if}~~\tau <-\Lambda\\
	0 & \text{if}~~|\tau| \leq \Lambda\\
	\big(\tau -\Lambda\big)^2& \text{if}~~\tau >\Lambda.
	\end{cases}
	\end{align*}
	So that, $F(\tau)= 0$ if and only if $|\tau|\leq \Lambda$, in particular $F(u_0)=0$ a.e. on $\Omega$. By convexity, $F'$ is non-decreasing, i.e.,  $(F'(\tau_1)-F'(\tau_2))(\tau_1-\tau_2)\geq 0$  for all $\tau_1,\tau_2\in \R$ in particular,  since $F'(0)=0$, we have $F'(\tau_1)\tau_1\geq 0$  for all $\tau_1\in \R$. Furthermore,  $F'(u(\cdot, t)) \in \mathbb{X}_\nu(\Omega|\R^N)$ because $u(\cdot, t) \in \mathbb{X}_\nu(\Omega|\R^N)$ and one can check that $F'$ is Lipschitz since $F''$ is bounded.  Therefore, testing the equation \eqref{eq:def-weak-solution} against $\zeta= F'(u)$ gives 
	\begin{align*}
	\frac{d}{\d t}\int_\Omega F(u(x,t))\d x= -\mathcal{E}(u, F'(u))-\int_\Omega\varphi(v)(x) F'(u(x,t))u(x,t)\d x\leq 0. 
	\end{align*}
	Since $F(u_0)=0$ almost everywhere on $\Omega$, integrating the inequality gives  
	\begin{align*}
	\int_\Omega F(u(x,t))\d x \leq 0\quad\text{for all }\quad 0\leq t\leq T. 
	\end{align*}
	Thus, $F(u(x,t)) =0$ a.e. on $\Omega_T$, and hence  $|u(x,t)|\leq \Lambda$ a.e.  on $\Omega_T$. 
\end{proof}

\begin{theorem}\label{thm:uniqueness}
	Assume that $\varphi$ satisfies  Assumption~\ref{eq:assumption-phi}., $u_0\in L^2(\Omega)\cap L^\infty(\Omega)$  and that for $\Lambda= \|u_0\|_{L^\infty(\Omega)}$  we have  $|\varphi'(\tau)|\le \kappa$ for $\tau\in [-\Lambda T,\Lambda T]$ for some constant $\kappa>0$. Then the weak solution of problem \eqref{eq:main-equation}
	is unique provided that $\kappa \Lambda T^2<1$.
\end{theorem}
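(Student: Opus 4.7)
The plan is to combine the maximum principle of Lemma~\ref{lem:max-principle} with the Lipschitz hypothesis on $\varphi$ so as to close a nonlinear Gronwall-type estimate on the difference of two solutions. Given two weak solutions $u_1,u_2$ of \eqref{eq:main-equation} sharing the same initial datum $u_0$, I would set $w = u_1-u_2$, $v_i = \int_0^T u_i(\cdot,\tau)\,\d\tau$, and introduce the shorthand $V = v_1-v_2 = \int_0^T w(\cdot,\tau)\,\d\tau$. By Lemma~\ref{lem:max-principle} we have $|u_i|\leq \Lambda$ a.e.\ on $\Omega_T$, hence $|v_i|\leq \Lambda T$ a.e.\ on $\Omega$; the standing hypothesis $|\varphi'|\leq \kappa$ on $[-\Lambda T,\Lambda T]$ then produces the pointwise Lipschitz bound $|\varphi(v_1)-\varphi(v_2)|\leq \kappa |V|$.

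Next, I would test the difference equation
\[
\partial_t w + \mathscr{L}w + \varphi(v_1)\,w = -(\varphi(v_1)-\varphi(v_2))\,u_2, \qquad w(\cdot,0)=0,
\]
against $w$ itself, which is admissible because the max-principle bounds $\varphi(v_1),\varphi(v_2),u_2$ in $L^\infty$, hence places $\partial_t w$ in $L^2(0,T;(\mathbb{X}_\nu(\Omega\vert\R^N))^*)$, so that $w\in H_\nu(0,T)\hookrightarrow C([0,T];L^2(\Omega))$ by Proposition~\ref{prop:emb}. Dropping the two non-negative terms $\mathcal{E}(w,w)$ and $\int_\Omega \varphi(v_1) w^2\,\dx$ and applying Cauchy--Schwarz with the bounds $|u_2|\leq \Lambda$ and $|\varphi(v_1)-\varphi(v_2)|\leq \kappa|V|$ should deliver the differential inequality
\[
\tfrac{1}{2}\tfrac{d}{\d t}\|w(t)\|_{L^2(\Omega)}^2 \leq \kappa\,\Lambda\,\|V\|_{L^2(\Omega)}\,\|w(t)\|_{L^2(\Omega)}.
\]

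The endgame is to convert this into a pointwise-in-time bound. Setting $G(t)=\|w(t)\|_{L^2(\Omega)}^2$ the inequality reads $G'(t)\leq 2\kappa\Lambda\|V\|\sqrt{G(t)}$, and a standard regularisation (working with $G+\varepsilon^2$, noting that $(\sqrt{G+\varepsilon^2})'\leq \kappa\Lambda\|V\|$, integrating from $G(0)=0$ and sending $\varepsilon\to 0$) yields $\|w(t)\|_{L^2(\Omega)}\leq \kappa\Lambda\,\|V\|_{L^2(\Omega)}\,t$ for all $t\in[0,T]$. Integrating in $t$ over $[0,T]$ and invoking the Minkowski inequality $\|V\|_{L^2(\Omega)}\leq \int_0^T\|w(t)\|_{L^2(\Omega)}\,\dt$ closes the loop and produces
\[
\|V\|_{L^2(\Omega)}\leq \tfrac{\kappa\Lambda T^2}{2}\,\|V\|_{L^2(\Omega)};
\]
since $\kappa\Lambda T^2<1$ is (more than) enough to make this self-absorbing, $V\equiv 0$ follows, and re-injecting into the pointwise estimate forces $w\equiv 0$.

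The main obstacle I expect is the global-in-time coupling carried by $(\varphi(v_1)-\varphi(v_2))u_2$: because $V$ depends on the full time-integral of $w$, a classical short-interval Gronwall argument applied to $w$ alone is unavailable, and the forcing term cannot be moved to the left-hand side by monotonicity. What rescues the scheme is the combination of two ingredients, namely the $L^\infty$-confinement of Lemma~\ref{lem:max-principle} which locates $v_1,v_2$ inside the compact set $[-\Lambda T,\Lambda T]$ where $\varphi'$ is controlled, and the non-Lipschitz ODE comparison $G'\leq 2\kappa\Lambda\|V\|\sqrt{G}$ with zero initial data, which upgrades the raw Lipschitz constant $\kappa\Lambda$ into the $T^2$-smallness condition appearing in the statement.
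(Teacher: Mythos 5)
Your proposal is correct and takes essentially the same route as the paper: the maximum principle of Lemma~\ref{lem:max-principle} to confine $v_1,v_2$ in $[-\Lambda T,\Lambda T]$, the Lipschitz bound $|\varphi(v_1)-\varphi(v_2)|\le\kappa|v_1-v_2|$, an energy estimate on the difference tested against itself, and a self-absorbing inequality forced by $\kappa\Lambda T^2<1$. The only (cosmetic) divergence is in closing the loop: the paper estimates $\|v_1-v_2\|_{L^2(\Omega)}^2\le T\int_0^T\|u_1-u_2\|_{L^2(\Omega)}^2\,\dt$ and integrates $\varrho'(t)\le 2\kappa\Lambda T\varrho^{1/2}(T)\varrho^{1/2}(t)$, whereas your Minkowski-based closure is equally valid and even yields the marginally sharper threshold constant $\kappa\Lambda T^2/2$.
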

\begin{proof}
	Suppose that problem \eqref{eq:main-equation} has two weak solutions $u_1$ and $u_2$, and put $\displaystyle v_i(x)=\int_0^T u_i(x,t)\,\dt$, $i=1,2$. Then $u=u_1-u_2$ is a weak solution to
	
	\[
	\begin{cases}
	\partial_t u +\mathscr{L} u + \varphi (v_1)\, u_1 - \varphi(v_2)\,u_2 = 0 &\mbox{ in }\; \Omega_T,\\
	u = 0 &\mbox{ in }\; \Sigma , \\
	u(\cdot,0) = 0 &\mbox{ in }\; \Omega.
	\end{cases}
	\]
	The maximum principle in Lemma \ref{lem:max-principle}, implies that  $|u_i|\le \Lambda$  a.e. in $\Omega_T$ and hence $|v_i|\le \Lambda T$, $i=1,2$, a.e.  in $\Omega$.  Testing the above equation with  $u$ leads to the following equality:
	\[
	\frac{1}{2}\,\frac{\mathrm{d}}{\dt}\|u(\cdot, t)\|^2_{L^2(\Omega)}+\mathcal{E}(u,u)+\int_\Omega \varphi(v_1)\, u^2\,\dx
	+\int_\Omega \big(\varphi(v_1)-\varphi(v_2)\big)\,u_2\, u\,\dx =0
	\]
	which implies that
	\[
	\frac{1}{2}\,\|u(\cdot,t)\|^2_{L^2(\Omega)}+\int_0^t \mathcal{E}(u,u)\mathrm{d}\tau \le
	\kappa \Lambda \int_0^t\int_\Omega |v(x)|\,|u(x,\tau)|\,\dx\mathrm{d}\tau
	\]
	for all $t\in[0,T]$, where $v=v_1-v_2=\int_0^T u(\cdot, \tau)\d \tau .$
	Noticing that, 
	\begin{align*}
	\|v\|^2_{L^2(\Omega)}=  \int_\Omega\Big| \int_0^T u(x ,\tau)\d \tau\Big|^2\d x\leq T \int_0^T \|u(\cdot, \tau)\|^2_{L^2(\Omega)}\d \tau, 
	\end{align*}
	we get 
	\begin{align*}
	\int_0^t\int_\Omega |v(x)|\,|u(x,\tau)|\,\dx\mathrm{d}\tau 
	&\leq T\Big(\int_0^T\|u(\cdot, \tau)\|^2_{L^2(\Omega)}\,\mathrm{d}\tau\Big)^{1/2}\,\Big(\int_0^t\|u(\cdot, \tau)\|^2_{L^2(\Omega)}\,\mathrm{d}\tau\Big)^{1/2},
	\end{align*}
	Therefore, we obtain the following inequality for all $t\in[0,T]$
	\begin{align}\label{eq:gronwall-inequality}
	\displaystyle
	\|u(\cdot,t)\|^2_{L^2(\Omega)}\le 2\kappa \Lambda T \Big(\int_0^T\|u(\cdot, \tau)\|^2_{L^2(\Omega)}\,\mathrm{d}\tau\Big)^{1/2}\Big(\int_0^t\|u(\cdot, \tau)\|^2_{L^2(\Omega)}\,\mathrm{d}\tau\Big)^{1/2}. 
	\end{align}
	In short we rewrite the above inequality as follows  
	\begin{align*}
	\varrho'(t)\le 2\kappa \Lambda T\varrho^{1/2}(T)\, \varrho^{1/2}(t)\quad\text{with}\quad \varrho(t)=\int_0^t\|u(\cdot, \tau)\|^2_{L^2(\Omega)}\,\mathrm{d}\tau.
	\end{align*}
	A routine integration yields that $\varrho^{1/2}(t)\le\kappa \Lambda  T^2\varrho^{1/2}(T)$ and, in particular, $\varrho^{1/2}(T)\le\kappa \Lambda  T^2\varrho^{1/2}(T)$.  The latter inequality holds true only if $\varrho(T)=0$ since  $\kappa \Lambda T^2<1$,  which implies that $u=0$. 
\end{proof}

We now point out the following the closing remark which shows how the function spaces considered in this note extends our studies to a sightly different type of problems. 

\begin{remark}\label{rmk:Neumann-problem}
	Analogous results to  those obtained in this notes can be established replacing the Dirichlet complement condition $u = 0 \mbox{ in }\; (\Omc)\times (0,T),$  the problem  \ref{eq:main-equation} with  the  Neumann complement condition $\mathcal{N}u = 0 \mbox{ in }\; (\Omc)\times (0,T),$
	where $\mathcal{N}u$ represents the nonlocal normal derivative of $u$ across as defined in \eqref{eq:nonlocal-normal-derivative}.  To this end, it is decisive to taking into account the setting of Theorem \ref{thm:compactness}, namely that $\Omega$ is bounded and Lipschitz and that $\nu$ satisfies the asymptotic condition 
	\eqref{eq:asymp}, in such a way that the compactness of the embedding $V_{\nu}(\Omega|\R^N) \hookrightarrow L^2(\Omega)$ holds true. Wherefrom, one readily obtains (see \cite{Foghem,FK22}) the Poincar\'e type inequality 
	\begin{align*}
	\|u\|^2_{L^2(\Omega)} \leq C\mathcal{E}(u,u) \qquad\text{for all $u\in V_{\nu}(\Omega|\R^N)^\perp$}, 
	\end{align*}
	for some constant $C>0$ and where $V_{\nu}(\Omega|\R^N)^\perp = \big\{ V_{\nu}(\Omega|\R^N): \int_\Omega u \d x=0\big\}$.  These observations, alongside of our procedure,  allow to replace the space $\mathbb{X}_{\nu}(\Omega|\R^N) $ with the space $V_{\nu}(\Omega|\R^N)^\perp.$
\end{remark}

\vspace{2mm}
\noindent \textbf{Data Availability Statement (DAS)}: Data sharing not applicable, no datasets were generated or analysed during the current study.

\bibliographystyle{alpha}

\end{document}